\newtheorem{X}{X}[section]
\newtheorem{lemma}[X]{Lemma}
\newtheorem{theorem}[X]{Theorem}
\newtheorem{conjecture}[X]{Conjecture}
\theoremstyle{definition}
\newtheorem{remark}[X]{Remark}
\newcommand{\V}{\text{Var}}
\newcommand{\E}{\mathbb E}
\renewcommand{\P}{\text{Prob}}
\newcommand{\R}{\mathbb R}
\newcommand{\Z}{\mathbb Z}
\newcommand{\Q}{\mathbb Q}
\newcommand{\F}{\mathbb F}
\title{Elliptic curves of unbounded rank and Chebyshev's bias}
\author{Daniel Fiorilli
}
\address{Department of Mathematics, University of Michigan, 530 Church Street, Ann Arbor MI 48109 USA}
\email{fiorilli@umich.edu}
\begin{document}

\begin{abstract}
We establish a conditional equivalence between quantitative unboundedness of the analytic rank of elliptic curves over $\mathbb Q$ and the existence of highly biased elliptic curve prime number races. 
We show that conditionally on a Riemann Hypothesis and on a hypothesis on the multiplicity of the zeros of $L(E,s)$, large analytic ranks translate into an extreme Chebyshev bias. Conversely, we show under a certain linear independence hypothesis on zeros of $L(E,s)$ that if highly biased elliptic curve prime number races do exist, then the Riemann Hypothesis holds for infinitely many elliptic curve $L$-functions and there exist elliptic curves of arbitrarily large rank.
\end{abstract}
\maketitle

\section{Introduction}

Let $E$ be a smooth elliptic curve whose minimal Weierstrass form is
\begin{equation}
E: y^2+a_1xy+a_3y = x^3+a_2x^2+a_4x+a_6
\label{equation Weierstrass form}
\end{equation} 
with $a_i \in \mathbb Z$, and let $N_E$ denote its conductor. The set of rational points on this curve $E(\mathbb Q)$ is a finitely generated abelian group by Mordell's Theorem, and hence is isomorphic to 
$$E(\mathbb Q) \cong \Z^{r} \oplus E(\Q)_{\text{tors}}, $$
where $E(\Q)_{\text{tors}}$ is the finite set of torsion points. Mazur's Theorem \cite{Maz1} gives the list of $15$ different possibilities for $E(\Q)_{\text{tors}}$. As for the integer $r=r_{\text{al}}(E)$, the algebraic rank of $E$, it is a very mysterious invariant of $E$. A central question in number theory is whether $r_{\text{al}}(E)$ is unbounded as $E$ varies.
 The highest rank found so far is due to Elkies \cite{El}, who explicitly exhibited integer coefficients $a_1, a_2, a_3, a_4$ and $a_6$ such that \eqref{equation Weierstrass form}
has algebraic rank at least $28$. It is conjectured that the set of all ranks of elliptic curves over $\mathbb Q$ is unbounded \cite{Bru,BS,FGH,Ul}. 
One approach to this conjecture is to study the $L$-function of $E$. The Birch and Swinnerton-Dyer Conjecture states that $r_{\text{al}}(E)$ is equal to the order of vanishing of $L(E,s)$ at $s=1$. Recall that the trace of the Frobenius endomorphism is given for $p\nmid N_E$ by $a_p(E)= p+1-\#E(\F_p)$, where $\#E(\F_p)$ is the number of projective points on the reduction of $E$ modulo $p$. Extending the definition of $a_p(E)$ to the whole set of primes by setting
$$ a_p(E) := \begin{cases}
1 \text{ if } E \text{ has split multiplicative reduction at }p \\
-1 \text{ if } E \text{ has nonsplit multiplicative reduction at }p \\
0 \text{ if } E \text{ has additive reduction at }p, \\
\end{cases} $$  
the $L$-function of $E$ is defined as
$$ L(E,s):= \prod_{p\mid N_E} \left( 1-\frac {a_p(E)}{p^s}\right)^{-1} \prod_{p\nmid N_E} \left( 1-\frac{a_p(E)}{p^s}+\frac p{p^{2s}}\right)^{-1}. $$

The Birch and Swinnerton-Dyer Conjecture can be seen as a local-to-global principle, since it asserts that understanding local points on $E$ is sufficient to understand its algebraic rank, which is a global invariant of $E$.
Note that the zeros of $L(E,s)$ come in conjugate pairs and are symmetric about the line $\Re(s)=1$, because of the functional equation relating $L(E,s)$ to $L(E,2-s)$.

Considering this, it is of crucial interest to understand the analytic rank $r_{\text{an}}(E)$, which by definition is the order of vanishing of $L(E,s)$ at $s=1$. 
Our main goal is to establish an equivalence between the conjecture that $r_{\text{an}}(E)$ is unbounded as $E$ varies over all elliptic curves over $\mathbb Q$ and a statement about the bias of certain prime number races formed with the local points on $E$. This is related to the initial calculations of Birch and Swinnerton-Dyer \cite{BSD1, BSD2}, who combined counts of local points on $E$ to predict its algebraic rank.

Note that if $r_{\text{an}}(E)$ is unbounded, then it should have a certain growth in terms of conductor, that is there should exist a function $f(N_E)$ tending to infinity as $N_E\rightarrow \infty$ such that

\begin{equation}
\limsup_{N_E\rightarrow \infty} \frac{r_{\text{an}}(E)}{ f(N_E)} >0.
\label{equation growth of rank}
\end{equation}  
There are two existing conjectures of this kind in the literature. Ulmer \cite{Ul} has shown the existence of non-isotrivial elliptic curves of arbitrarily large rank over the rational function field $\mathbb F_p(t)$ for which the Birch and Swinnerton-Dyer Conjecture holds. His (more general) result shows that Mestre's bound \cite{Me} on ranks of elliptic curves is best possible in the function field case. Based this result, he conjectured\footnote{Ulmer \cite{Ul} makes this conjecture on the algebraic rank, however the Birch and Swinnerton-Dyer Conjecture implies that the corresponding statement on the analytic rank is equivalent.} \cite{Ul,Ul2} that $f(N_E)=\log N_E/\log\log N_E$ is admissible in \eqref{equation growth of rank}. Note that this is also believed to be best possible, since Mestre's contidional bound on ranks of elliptic curves \cite{Me} states that
$ r_{\text{an}}(E) \ll \log N_E/\log\log N_E$. Recently, Farmer, Gonek and Hughes \cite{FGH} have developed a random matrix theory model for predicting the maximal size of the Riemann zeta function on the critical line, which led them to conjecture that
$$ \max_{0\leq t \leq T } |\zeta( \tfrac 12+it)| = e^{(1+o(1))\sqrt{\tfrac 12\log T \log\log T}}. $$	
 Their model also suggests that $f(N_E) =\sqrt{\log N_E \log\log N_E}$ is admissible and best possible in \eqref{equation growth of rank}. 

While the conjectures of Ulmer and Farmer, Gonek and Hughes are incompatible, they both imply the following weaker conjecture.
\begin{conjecture}
We have that
$$ \limsup_{N_E\rightarrow \infty} \frac{r_{\text{\emph{an}}}(E)}{\sqrt{\log N_E}} = \infty. $$
\label{conjecture unbounded ranks}
\end{conjecture}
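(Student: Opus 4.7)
This is an open problem implied by both the Ulmer and Farmer--Gonek--Hughes heuristics, and any serious proposal has to acknowledge that. The strategy motivating the present paper is to exploit the conditional equivalence announced in the abstract between quantitative unboundedness of the analytic rank and the existence of extremely biased elliptic curve prime number races. My plan would be to use the reverse direction of that equivalence: produce, by independent arithmetic means, a sequence of curves $E_n$ whose Chebyshev-type race formed from the normalized sums $\sum_{p\leq x} a_p(E_n) \log p / \sqrt{p}$ exhibits a bias of magnitude $\gg \sqrt{\log N_{E_n}}$, and then transfer this into the desired lower bound on $r_{\text{an}}(E_n)$ via the dictionary this paper develops.

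To carry this out, the first step is to make the equivalence quantitatively explicit in the correct direction. Heuristically, the explicit formula attaches to each zero of $L(E,s)$ at $s=1$ a term of consistent sign in the partial sum above, while the remaining zeros contribute mean-zero oscillation; summing the drift up to $x$ produces a contribution of order $r_{\text{an}}(E)\log x$, so a normalized bias of order $B$ translates into $r_{\text{an}}(E)\asymp B$. The second step is to construct curves where a $\sqrt{\log N_E}$-sized bias can be forced without circularly invoking the rank. Candidates I would try include Mestre--style high-rank families, quadratic twists of a fixed curve by discriminants chosen so that $a_p(E)<0$ on a positive-density set of small primes, and congruent-number-type curves associated with primes that split in a prescribed number field; in each case one would try to use an approximate functional equation to estimate $L(E,1)$ and to localize the bias before invoking the equivalence.

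The main obstacle is that the equivalence of this paper converts the rank conjecture rather than dissolving it: producing an unconditional bias of the required magnitude is itself tantamount to producing high rank, since under RH and LI the race is a faithful reflection of $r_{\text{an}}(E)$. A more tractable intermediate goal, and the one I would actually pursue first, is to prove the conjecture on average over a suitable family of twists, where moments of $L(E,1)$ can be analyzed unconditionally and where the equivalence can be applied to the average bias to extract $\sqrt{\log N_E}$-growth on a positive-density subfamily. A parallel line of attack would be to attempt to specialize Ulmer's function field curves of rank $\log N_E/\log\log N_E$ to $\mathbb{Q}$, with the classical loss of conductor control under specialization being the principal barrier; this seems strictly harder than the averaging approach, and I would keep it only as a back-up plan.
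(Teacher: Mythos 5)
The statement you were asked about is Conjecture \ref{conjecture unbounded ranks}, and the paper does not prove it --- it is an open problem, stated as a common consequence of the (mutually incompatible) conjectures of Ulmer and of Farmer--Gonek--Hughes, and the paper's entire contribution is to establish a \emph{conditional equivalence} between it and the existence of highly biased races (Theorems \ref{theorem main} and \ref{theorem converse}). You have correctly recognized this, and your proposal is a research program rather than a proof; as such there is nothing to check against a ``paper proof,'' and the honest conclusion is that no proof exists here to reconstruct.

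That said, two points about your plan are worth sharpening. First, your heuristic dictionary is slightly off: the paper's translation is $\E[X_E]=2r_{\text{an}}(E)-1$ together with $\V[X_E]\asymp\log N_E$, so the relevant normalized bias factor is $B(E)\asymp r_{\text{an}}(E)/\sqrt{\log N_E}$, and the reverse implication (Theorem \ref{theorem converse}) requires the logarithmic densities $\overline{\delta}(E_n)$ to tend to $1$ (or, per the paper's remark, $\sqrt{\log N_E}\,(\overline{\delta}(E)-\tfrac12)\to\infty$), not merely a bias ``of magnitude $\gg\sqrt{\log N_{E_n}}$'' in the raw sum. Second, the circularity you flag is indeed the fatal obstruction to your main route: under ECRH and LI the density $\delta(E)$ is determined by $r_{\text{an}}(E)/\sqrt{\log N_E}$ through the central limit theorem in the proof of Theorem \ref{theorem converse}, so forcing $\delta(E_n)\to 1$ by ``independent arithmetic means'' is provably equivalent to the rank growth you are trying to establish, and the averaging and specialization fallbacks you list run into the moment barriers and conductor-control losses you yourself identify. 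Your proposal is a fair map of the landscape, but it does not, and cannot in its present form, constitute a proof of the conjecture.
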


We now describe the framework in which we relate Conjecture \ref{conjecture unbounded ranks} to elliptic curve prime number races. Chebyshev's bias is his observation in a letter to Fuss that there seems to be more primes of the form $4n+3$ than of the form $4n+1$. Chebyshev's prime number race is the study of the oscillatory quantity 
$$C(x):= \pi(x;4,3)-\pi(x;4,1), $$
which is known to have infinitely many sign changes \cite{Li}. For an account of the rich history of this subject, the reader is referred to the expository article \cite{GM}. Rubinstein and Sarnak \cite{RubSar} established under GRH and a linear independence hypothesis on the zeros of Dirichlet $L$-functions that $C(e^y)$ is positive for approximately $99.59\%$ of the values of $y$. More precisely, they established that
$$ \lim_{ Y \rightarrow \infty } \frac{\text{meas}\{ y\leq Y : C(e^y)>0 \}}Y  = 0.9959... $$

One can study a related quantity by considering local points on an elliptic curve $E$. The celebrated Hasse bound states that counting projective points,
$$ |\#E(\F_p)- (p+1)| < 2\sqrt p. $$
It is also known that the proportion of primes for which $a_p(E)= p+1-\#E(\F_p)$ is positive is equal to the proportion of primes for which it is negative. In the non-CM case, this follows from the Sato-Tate Conjecture for elliptic curves over $\mathbb Q$, recently established by Taylor, Clozel, Harris and Shepherd-Barron \cite{Tay,CHT,HST}, which states that when $E$ has no complex multiplication, the numbers $a_p(E)/2\sqrt p$ are equidistributed in $[-1,1]$ with respect to the measure $(2/\pi)\sqrt{1-t^2} dt$ (the distribution is simpler when $E$ has complex multiplication). 
Considering this, Mazur \cite{Maz} introduced the race between primes for which $a_p(E)>0$ and primes for which $a_p(E)<0$. Defining
$$ T(t):= \#\{ p\leq t : a_p(E) >0 \} - \#\{p\leq t : a_p(E) < 0\},$$
Mazur plotted the graph of $T(t)$ for various elliptic curves. The reader is encouraged to consult \cite{Maz} in which several other related quantities are studied.
Looking at the graphs appearing in Section 2.3 of \cite{Maz}, one readily sees that $T(t)$ exhibits a very erratic behaviour. Moreover it is very apparent in these plots that as $r_{\text{al}}(E)$ increases, $-T(t)$ becomes more and more biased towards positive values. Indeed, in Figure 2.5 of the paper, $-T(t)$  does not exhibit any negative value. 

Under standard hypotheses, many features of $T(t)$ were explained by Sarnak \cite{Sar}, using the explicit formula for $L(\text{Sym}^n E,s)$. Sarnak also introduced a closely related quantity $S(t)$ defined below, which as he showed can be understood using the explicit formula for $L(E,s)$ alone. Under a Riemann Hypothesis and a Linear Independence Hypothesis, Sarnak deduced an exact formula for the characteristic function of the limiting distribution of $ue^{-\frac u2}S(e^u)$, and uncovered the direct influence of the analytic rank of $E$ on this quantity.

Building on the work of Sarnak, we will study the quantity $S(t)$, which compares the primes $p$ for which $a_p(E)<0$ against those for which $a_p(E)>0$, weighted by the value of $a_p(E)/\sqrt p$. This quantity has a very similar behaviour to that of $-T(t)$. Moreover, it can be analyzed using $L(E,s)$ alone \cite{Sar}, in contrast to the analysis of $-T(t)$ which requires hypotheses on symmetric power $L$-functions of $E$.

We define the elliptic curve prime number race
$$S(t):= -\sum_{p\leq t} \frac{a_p(E)}{\sqrt p}, $$
and wish to understand the set of $t$ for which $S(t)\geq 0$. To measure the size of this set, that is to measure the bias of $S(t)$ towards either positive or negative values, we define the following lower and upper logarithmic densities:
$$ \underline{\delta}(E) := \liminf_{T\rightarrow \infty} \frac 1{\log T} \int_{ \substack{ 2\leq t\leq T:\\ S(t) \geq 0}} \frac {dt}t, \hspace{1cm} \overline{\delta}(E) := \limsup_{T\rightarrow \infty} \frac 1{\log T} \int_{ \substack{ 2\leq t\leq T:\\ S(t) \geq 0}} \frac {dt}t.$$
If these two densities are equal, then we denote them both by $\delta(E)$. 
Under ECRH and LI($E$) (see the definitions below), Sarnak \cite{Sar} has shown that $\delta(E)$ exists, and differs from $\frac 12$. In other words, $S(t)$ is always biased. Sarnak also discovered the dependence of this bias on the analytic rank of $E$, and a consequence of his results is that under ECRH and LI(E), elliptic curves of analytic rank zero have $\delta(E)<\frac 12$ (so $S(t)$ is biased towards negative values), and elliptic curves of analytic rank $\geq 1$ have $\delta(E)>\frac 12$ (so $S(t)$ is biased towards positive values).

Our first main result is that Conjecture \ref{conjecture unbounded ranks} implies that the quantity $S(t)$ can be arbitrarily biased, under the two following assumptions. Note that the second of these assumption is significantly weaker than the linear independence assumption used in \cite{Sar}.

\noindent \textbf{Hypothesis ECRH} (Elliptic Curve Riemann Hypothesis): \textit{For any elliptic curve $E$ over $\mathbb Q$, the non-trivial zeros of $L(E,s)$ have real part equal to $1$.}

\noindent \textbf{Hypothesis BM} (Bounded Multiplicity): \textit{There exists an absolute constant $C\geq 1$ such that for any elliptic curve $E$ over $\mathbb Q$, the non-real zeros of $L(E,s)$ have multiplicity at most $C$.}

\begin{theorem}[Unbounded rank $\Rightarrow$ arbitrarily biased elliptic curve prime number races]
Assume ECRH and BM, and assume Conjecture \ref{conjecture unbounded ranks} on the analytic rank of elliptic curves over $\mathbb Q$. Then for any $\epsilon>0$, there exists an elliptic curve $E_{\epsilon}$ over $\mathbb Q$ such that 
$$1-\epsilon < \underline{\delta}(E_{\epsilon})\leq \overline{\delta}(E_{\epsilon}) <1. $$
That is to say, there exists arbitrarily biased elliptic curve prime number races.
\label{theorem main}
\end{theorem}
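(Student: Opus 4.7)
The plan is to leverage the explicit formula for $L(E,s)$ to express the normalized race function $X(u):=u\,e^{-u/2}S(e^u)$ as a rank-driven main term plus an almost-periodic oscillating error, then use Hypothesis BM to bound the second moment of that error in terms of $\log N_E$ via the standard zero-counting estimate. Chebyshev's inequality will convert the resulting signal-to-noise ratio $r_{\text{an}}(E)^2/\log N_E$ into a bound on $1-\underline{\delta}(E)$, and Conjecture~\ref{conjecture unbounded ranks} supplies a sequence of curves along which this ratio diverges.

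In detail, I first apply the Riemann--Weil explicit formula for $L(E,s)$ to a smooth weight adapted to $\sum_{p\leq e^u}a_p(E)/\sqrt p$, and use partial summation to pass from $\Lambda_E$-weighted sums to $a_p/\sqrt p$. Under Hypothesis ECRH every non-trivial zero has the form $\rho=1+i\gamma$ with $\gamma\in\R$, and I expect a decomposition
\[
X(u) \;=\; M(E)\;+\;Y(u)\;+\;o_E(1),\qquad Y(u)\;=\;-\sum_{\gamma\neq 0}m_\gamma\,\frac{e^{iu\gamma}}{\tfrac12+i\gamma},
\]
where $M(E)=r_{\text{an}}(E)+c$ with $c$ an absolute constant absorbing the contributions from prime squares and archimedean factors, and $m_\gamma$ denoting the multiplicity of the zero at $1+i\gamma$.

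Next I compute the logarithmic second moment of $Y$: by orthogonality its mean vanishes and
\[
\lim_{U\to\infty}\frac{1}{U}\int_0^U |Y(u)|^2\,du \;=\; \sum_{\gamma\neq 0}\frac{m_\gamma^2}{\tfrac14+\gamma^2}.
\]
Hypothesis BM gives $m_\gamma\leq C$, so this variance is at most $C\sum_\gamma m_\gamma/(\tfrac14+\gamma^2)\ll C\log N_E$ by Riemann--von Mangoldt. Chebyshev's inequality then yields
\[
\frac{1}{U}\,\mathrm{meas}\{u\in[0,U]:X(u)<0\} \;\leq\; \frac{\V(Y)}{M(E)^2} \;\ll\; \frac{C\log N_E}{r_{\text{an}}(E)^2},
\]
and Conjecture~\ref{conjecture unbounded ranks} provides $E_\epsilon$ with $r_{\text{an}}(E_\epsilon)^2 \gg_\epsilon C\log N_{E_\epsilon}$, making the right-hand side smaller than $\epsilon$. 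Translating from $u$ back to the logarithmic $t$-measure gives $\underline{\delta}(E_\epsilon)>1-\epsilon$. The strict inequality $\overline{\delta}(E_\epsilon)<1$ would then follow from an oscillation argument: since $L(E_\epsilon,s)$ has infinitely many non-central zeros, $Y$ is a non-degenerate Besicovitch almost-periodic function, and hence $X<0$ on a set of positive logarithmic density, either via an anti-concentration estimate using the second moment against a matching fourth-moment bound (again exploiting BM to control the additive energy among ordinates) or via a Kronecker--Weyl argument restricted to a $\Z$-linearly independent subset of low-lying ordinates.

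The main obstacle is the tight balance in the variance step: Hypothesis BM is exactly what replaces the quadratic multiplicity weight $m_\gamma^2$ by the linear weight $Cm_\gamma$, for which Riemann--von Mangoldt produces precisely $\log N_E$. It is this matching with the $\sqrt{\log N_E}$ threshold in Conjecture~\ref{conjecture unbounded ranks} that makes the Chebyshev inequality just barely go through; weakening BM to allow $C=C(N_E)$ to grow would force a correspondingly stronger rank hypothesis. A secondary subtlety is establishing $\overline{\delta}(E_\epsilon)<1$ without the full linear independence hypothesis used in \cite{Sar}, which will require a more delicate oscillation or anti-concentration input than in the Rubinstein--Sarnak framework.
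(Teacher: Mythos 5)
Your proposal follows the paper's argument quite closely: explicit formula under ECRH to isolate a rank-driven main term plus an almost-periodic oscillation, a second-moment computation giving variance $\sum^* m_\gamma^2/(\tfrac14+\gamma^2)$, Hypothesis BM to replace $m_\gamma^2$ by $Cm_\gamma$ and thereby land on $\ll C\log N_E$ via Riemann--von Mangoldt, Chebyshev's inequality to convert the diverging ratio $r_{\text{an}}(E)^2/\log N_E$ into a density bound, and a separate anti-concentration argument for $\overline{\delta}<1$. Two small points. First, your main term $M(E)=r_{\text{an}}(E)+c$ has the wrong coefficient: the zero at $s=1$ of order $r_{\text{an}}(E)$ contributes $r_{\text{an}}(E)/\tfrac12 = 2r_{\text{an}}(E)$ to $\sum_\gamma e^{iu\gamma}/(\tfrac12+i\gamma)$, and the constant $-1$ comes from the symmetric square (not ``archimedean'') contribution, so the correct value is $2r_{\text{an}}(E)-1$; this does not affect the asymptotic argument. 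Second, the passage from the Chebyshev tail bound on the limiting measure $\mu_E$ to a bound on the logarithmic density of the set $\{u:X(u)<0\}$ is not quite automatic and requires approximating indicator functions by Lipschitz functions in both directions; the paper handles this via a dedicated lemma and a careful tail estimate for the moments of $\mu_E$. Your sketch of the final step ($\overline{\delta}<1$) is more speculative than the paper's, which cites a lower bound for $\mu_E(-\infty,-1]$ in the style of Rubinstein--Sarnak Theorem 1.2; a Kronecker--Weyl argument on a finite linearly independent subset of ordinates would also work, but as you note the Rubinstein--Sarnak tail technique avoids needing any linear independence at all, which is the point of replacing LI by BM.
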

\begin{remark}
The proof of Theorem \ref{theorem main} does not use the full strength of ECRH and BM. Indeed it is sufficient to assume that these hypotheses hold for an infinite sequence of elliptic curves $\{ E(n) \}_{n\geq 1}$ such that
$$ \lim_{n\rightarrow \infty} \frac{r_{\text{an}}(E(n))}{\sqrt{\log N_{E(n)}}}=\infty. $$
\end{remark}

Our second main result is a converse result, under a linear independence hypothesis on the zeros of $L(E,s)$. We will show that the existence of highly biased elliptic curve prime number races is very strong; under the following assumption, it implies the Riemann Hypothesis for an infinite family of $L$-functions as well as the existence of elliptic curves over $\mathbb Q$ of arbitrarily large analytic rank.

\noindent \textbf{Hypothesis LI(E)} (Linear Independence): \textit{The function $L(E,s)$ has at least one zero on the line $\Re(s)=\beta_E:=\sup \{ \Re(\rho) : L(E,\rho)=0 \}$. Moreover, the multiset $Z(E):=\{ \Im(\rho) \geq 0 : L(E,\rho)=0, \Re(\rho)=\beta_E, \rho \neq 1 \}$ is linearly independent over $\Q$.}

\begin{remark}
If the Riemann Hypothesis holds for $L(E,s)$, that is $\beta_E=\sup \{ \Re(\rho) : L(E,\rho)=0 \}=1$, then Hypothesis LI(E) implies that the multiset of all positive imaginary parts of zeros of $L(E,s)$ is linearly independent over $\mathbb Q$. The reason why only positive imaginary parts are considered is that the zeros of $L(E,s)$ come in conjugate pairs, since this $L$-function is self-dual (because $a_p(E)\in \mathbb R$). Also we can potentially have $L(E,1)=0$ (in the case $\beta_E=1$), and hence it is important not to include this zero in the multiset $Z(E)$.

If $L(E,s)$ has a non-trivial zero outside the critical line $\Re (s)=1$, that is $\beta_E>1$, then one should be careful with the additional symmetry of the set of zeros created by the functional equation. However if $\rho$ is a zero of $L(E,s)$, then the set $Z(E)=\{ \Im(\rho) \geq 0 : L(E,\rho)=0, \Re(\rho)=\beta_E, \rho \neq 1 \}$ contains at most one of the numbers $\{ \rho, \overline{\rho}, 2-\rho, 2-\overline{\rho} \}$.

Note that Hypothesis LI(E) is a hypothesis on the zeros of $L(E,s)$ lying on the line $\Re(s)=\beta_E$. In particular, if the Riemann Hypothesis does not hold for this $L$-function, then nothing is assumed on the zeros lying on the critical line. 

Finally, note that if $\beta_E>1$, then LI(E) implies that $L(E,\beta_E)\neq 0$, since a set containing zero is linearly dependent over $\mathbb Q$. This is similar to Chowla's Conjecture which states that Dirichlet $L$-functions do not vanish for $s\in (0,1]$.  

\end{remark}

\begin{theorem}[Arbitrarily biased elliptic curve prime number races $\Rightarrow$ unbounded rank]
\label{theorem converse}
Assume that there exists a sequence of elliptic curves $E_n$ over $\mathbb Q$ whose conductor tends to infinity with $n$, for which $LI(E_n)$ holds for $n\geq 1$ and whose associated prime number race is arbitrarily biased, that is as $n\rightarrow \infty$, $$\overline{\delta}(E_n) \rightarrow 1.$$
Then, there exist elliptic curves over $\mathbb Q$ of arbitrarily large analytic rank. More precisely, Conjecture \ref{conjecture unbounded ranks} holds.
\end{theorem}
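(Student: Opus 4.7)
My plan is to argue by contradiction: assume there is an absolute constant $M>0$ such that $r_{\text{an}}(E)\le M\sqrt{\log N_E}$ for every elliptic curve $E/\Q$, and derive that the hypothesis of the theorem must fail. The central tool is the explicit formula for $L(E_n,s)$, which, after partial summation and normalization, expresses $u\,e^{-(\beta_{E_n}-1/2)u}\,S(e^u)$ as a linear combination of the quantities $e^{i\gamma u}$ with $\gamma$ ranging over the imaginary parts of zeros of $L(E_n,s)$ on the line $\Re(s)=\beta_{E_n}$, plus a term proportional to $r_{\text{an}}(E_n)$ when $\beta_{E_n}=1$ (from the zero at $s=1$), plus an $o(1)$ error coming from zeros with real part strictly less than $\beta_{E_n}$.

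The first step is to rule out $\beta_{E_n}>1$ for infinitely many $n$. In that case, by the remark following Hypothesis $LI(E)$, $L(E_n,\beta_{E_n})\neq 0$, so $Z(E_n)\subset(0,\infty)$ and the main term is purely oscillatory. Hypothesis $LI(E_n)$ together with the Kronecker--Weyl equidistribution theorem then implies that the main term, suitably truncated, is an almost-periodic function of $u$ of mean zero. A Besicovitch-type argument would then force $\overline\delta(E_n)\le\tfrac12+o(1)$, contradicting $\overline\delta(E_n)\to 1$. Hence $\beta_{E_n}=1$ eventually, i.e.\ the Riemann Hypothesis holds for $L(E_n,s)$ for all sufficiently large $n$.

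With $\beta_{E_n}=1$ in hand, a second Kronecker--Weyl argument, essentially that of Sarnak, produces a limiting logarithmic distribution $\mu_n$ for $u\,e^{-u/2}\,S(e^u)$ whose mean is $m_n=2\,r_{\text{an}}(E_n)$ and whose variance is
\[
V_n \;=\; \sum_{\gamma\in Z(E_n)} \frac{2}{\tfrac14+\gamma^2} \;\asymp\; \log N_{E_n},
\]
the last estimate being a standard consequence of the Riemann--von Mangoldt zero-counting formula for $L(E_n,s)$. Since the limiting distribution exists under $LI(E_n)$, we have $\overline\delta(E_n)=\mu_n([0,\infty))$, and Chebyshev's inequality applied to a random variable of mean $m_n$ and variance $V_n$ yields
\[
1-\overline\delta(E_n) \;=\; \mu_n((-\infty,0)) \;\le\; \frac{V_n}{m_n^2} \;\le\; \frac{C\,\log N_{E_n}}{4\,r_{\text{an}}(E_n)^2} \;\le\; \frac{C}{4M^2}.
\]
This contradicts $\overline\delta(E_n)\to 1$, so the assumed upper bound on $r_{\text{an}}(E)$ cannot hold and Conjecture \ref{conjecture unbounded ranks} follows.

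The main obstacle, I expect, is the first step: ruling out $\beta_{E_n}>1$ while using only the partial linear independence built into $LI(E_n)$, which imposes no restriction on zeros to the left of the line $\Re(s)=\beta_{E_n}$. A secondary technical difficulty is establishing rigorously both the existence of the limiting distribution $\mu_n$ and the identification $\overline\delta(E_n)=\mu_n([0,\infty))$; this is an almost-periodic approximation problem requiring the tail of the truncated series over zeros to be uniformly Cauchy in the Besicovitch $B^2$-sense.
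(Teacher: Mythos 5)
Your overall architecture matches the paper's: first rule out $\beta_{E_n}>1$ by showing that under LI$(E_n)$ the limiting logarithmic distribution would force $\delta(E_n)=\tfrac12$, then use the first two moments to contradict a hypothetical bound $r_{\text{an}}(E)\ll\sqrt{\log N_E}$. But your final step contains a genuine error that breaks the proof. Chebyshev's inequality gives an \emph{upper} bound on $\mu_n((-\infty,0))=1-\overline{\delta}(E_n)$, and an upper bound on $1-\overline{\delta}(E_n)$ is perfectly consistent with $\overline{\delta}(E_n)\to 1$ --- it cannot produce a contradiction. (Your last inequality is also reversed: under $r_{\text{an}}(E_n)\le M\sqrt{\log N_{E_n}}$ one has $V_n/m_n^2\ge c/M^2$, not $\le$.) What you actually need is a \emph{lower} bound on $\mu_n((-\infty,0))$ bounded away from $0$, i.e.\ an anti-concentration statement, and second moments alone never give that. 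The paper supplies it by proving a central limit theorem: normalizing $Y_E=(X_E-\E[X_E])/\sqrt{\V[X_E]}$ and using the explicit Bessel-product form of $\hat{\mu}_E$ under LI$(E)$ and RH, one shows $\hat{Y}_E(t)\to e^{-t^2/2}$ as $N_E\to\infty$, whence $\delta(E)\le\P[Y_E>-C]\to\frac1{\sqrt{2\pi}}\int_{-C}^{\infty}e^{-t^2/2}\,dt<1$. This Gaussian limit is the missing idea; note the asymmetry with the forward direction (Theorem \ref{theorem main}), where Chebyshev suffices precisely because there one wants an upper bound on a tail.

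A secondary gap is in your first step: ``mean zero'' does not imply $\overline{\delta}\le\tfrac12+o(1)$; a mean-zero almost-periodic function can be positive on a set of logarithmic density arbitrarily close to $1$. The correct argument, which your Kronecker--Weyl setup does support, is that when $\beta_{E_n}>1$ the Fourier transform of $\mu_{E_n}$ is the real, even function $\prod_{\Im(\rho)>0}J_0(2\xi/|\rho|)$ (no exponential prefactor, since LI rules out a zero at $s=\beta_{E_n}$ itself), so $\mu_{E_n}$ is \emph{symmetric}; combined with absolute continuity (from the decay of $J_0$), this gives $\delta(E_n)=\mu_{E_n}([0,\infty))=\tfrac12$ exactly. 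Finally, two harmless slips: the mean is $2r_{\text{an}}(E)-1$ (the $-1$ coming from the $\alpha_p\beta_p=1$ contribution via the prime number theorem), and under LI the zeros are simple so the variance is $\sum_{\gamma\neq 0}(\tfrac14+\gamma^2)^{-1}\asymp\log N_E$ as you say.
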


\begin{remark} In the proof of Theorem \ref{theorem converse}, we actually show that under LI(E), the bias in $S(t)$ implies the Riemann Hypothesis for $L(E,s)$. The next Theorem is a precise statement of this implication.
\end{remark}

\begin{theorem}[Biased elliptic curve prime number race $\Rightarrow$ Riemann Hypothesis]
\label{theorem converse RH}

Assume that LI(E) holds and that either $\underline{\delta}(E)\neq \frac 12$ or $\overline{\delta}(E)\neq \frac 12$. Then the Riemann Hypothesis holds for $L(E,s)$.
\end{theorem}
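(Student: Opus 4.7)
My plan is to prove the contrapositive: assume the Riemann Hypothesis fails for $L(E,s)$, so $\beta_E>1$, and deduce under LI(E) that $\underline\delta(E)=\overline\delta(E)=\tfrac 12$. The strategy is to produce an asymptotic formula showing that $S(x)$ is, up to lower order, equal to $\frac{x^{\beta_E-1/2}}{\log x}\,A(\log x)$, where $A$ is a real-valued almost periodic function whose frequencies form the set $Z(E)$, and then to invoke the Kronecker--Weyl equidistribution theorem to show that $A$ is positive and negative on sets of equal logarithmic density.

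First I would apply a truncated Perron formula to $\sum_{n\le x}\Lambda_E(n)/\sqrt n$ using the Dirichlet series $-L'(E,s+\tfrac 12)/L(E,s+\tfrac 12)=\sum_n\Lambda_E(n)n^{-s-1/2}$, and shift the contour leftward past the poles of the integrand, which are located at $s=\rho-\tfrac 12$ for each nontrivial zero $\rho$ of $L(E,s)$. The residues contribute $-\sum_{\rho}m_\rho x^{\rho-1/2}/(\rho-\tfrac 12)$, while the prime-power contribution $\sum_{k\ge 2}\sum_{p^k\le x}(\alpha_p^k+\beta_p^k)p^{-k/2}$ is $O(\sqrt x/\log x)$ by Hasse's bound. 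Partial summation converts this into an asymptotic for $S(x)$. Because $\beta_E>1$, the prime-power term $\sqrt x/\log x$ and the contribution from any zero with $\Re(\rho)<\beta_E$ are of strictly smaller exponential order than $x^{\beta_E-1/2}/\log x$. Pairing each zero $\rho=\beta_E+i\gamma$ with its conjugate (noting $L(E,\beta_E)\ne 0$ by the remark following LI(E)), and substituting $x=e^u$, yields
$$S(e^u)=\frac{e^{u(\beta_E-1/2)}}{u}\,A(u)+o\!\left(\frac{e^{u(\beta_E-1/2)}}{u}\right),\qquad A(u):=2\Re\sum_{\gamma\in Z(E)}\frac{m_{\beta_E+i\gamma}}{(\beta_E-\tfrac 12)+i\gamma}\,e^{iu\gamma}.$$

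Next, LI(E) guarantees that the frequencies in $Z(E)$ are positive and $\Q$-linearly independent, so by Kronecker--Weyl the function $A(u)$ has the same limiting distribution as the random series $Y:=2\Re\sum_\gamma c_\gamma e^{i\Theta_\gamma}$ with $c_\gamma:=m_{\beta_E+i\gamma}/((\beta_E-\tfrac 12)+i\gamma)$ and the $\Theta_\gamma$ i.i.d.\ uniform on $[0,2\pi]$. Each summand $2\Re(c_\gamma e^{i\Theta_\gamma})$ is symmetric about $0$, the summands are independent, and $\sum_\gamma|c_\gamma|^2<\infty$ by the standard zero count $\#\{\gamma:|\gamma|\le T\}\ll T\log T$. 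Hence $Y$ has a symmetric continuous distribution, so $\P(Y\ge 0)=\tfrac 12$, and since $\operatorname{sign}(S(e^u))=\operatorname{sign}(A(u))$ outside a set of density zero, one obtains $\underline\delta(E)=\overline\delta(E)=\tfrac 12$, contradicting the hypothesis.

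The hard step is the rigorous justification of the explicit formula: without any zero-free region strictly to the right of $\Re(s)=\beta_E$, the contour shift must be executed with the Perron height $T$ tending to infinity, controlled via Phragm\'en--Lindel\"of bounds for $L(E,s)$ combined with the $O(T\log T)$ zero count. One must also verify that zeros of $L(E,s)$ with $\Re(\rho)$ just below $\beta_E$---about which LI(E) says nothing---cannot conspire to match the leading exponential rate; this follows from the strict inequality $\Re(\rho)<\beta_E$ together with the finiteness of zeros in every bounded height strip, via a Landau-type bound on partial sums of $\sum_\rho x^{\rho-1/2}/(\rho-\tfrac 12)$.
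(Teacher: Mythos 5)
Your proposal is correct and follows essentially the same route as the paper: the normalization of $S(x)$ by $x^{\beta_E-1/2}/\log x$, the explicit formula with truncation plus $L^2$ control of the tail of the sum over zeros, and the observation that when $\beta_E>1$ the drift term $2r_{\text{an}}(E)-1$ disappears so that under LI($E$) the limiting logarithmic distribution is symmetric and continuous, forcing $\delta(E)=\tfrac12$. Your random series $Y=2\Re\sum_\gamma c_\gamma e^{i\Theta_\gamma}$ with i.i.d.\ uniform phases is exactly the random variable whose characteristic function is the product of Bessel factors $\prod_\rho J_0(2\xi/|\rho|)$ computed in the paper, and the "hard step" you flag (zeros with real part just below $\beta_E$) is handled there precisely as you suggest, by finiteness of zeros in bounded strips for the truncated sum and a mean-square bound for the remainder.
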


Theorems \ref{theorem converse} and \ref{theorem converse RH} provide a method to simultaneously probe the Riemann Hypothesis and the unboundedness of the rank of elliptic curves over $\mathbb Q$. By computing the local points on an elliptic curve $E$, one can plot the prime number race $S(t)$ and if this graph is very biased towards positive values, then this gives evidence towards these two outstanding conjectures. A very strong bias is already present in the quantity $-T(t)$ associated to the rank three curve $E: y^2 + y = x^3 - 7x + 6$ appearing in Figure 2.5 of \cite{Maz}.

\begin{remark}
One can weaken the second hypothesis of Theorem \ref{theorem converse} to
$$ \limsup_{N_E \rightarrow \infty} \sqrt{\log N_E}(\overline{\delta}(E) -\tfrac 12)=\infty, $$
and still deduce the unboundedness of the analytic rank of elliptic curves over $\mathbb Q$. 

\end{remark}

\section{Proof of the necessary condition (Theorem \ref{theorem main})}
\label{section necessary condition}

We start with an outline of the proof of Theorem \ref{theorem main}. Our strategy is to show that under ECRH, the quantity
$$ E(e^y) := -\frac{y}{e^{y/2}}\sum_{p\leq e^y} \frac{a_p(E) }{\sqrt p}  $$
has a limiting distribution which is the same as the distribution of a certain random variable 
$X_E$. While we do not use this explicitly, one can see that
$$X_E = 2r_{\text{an}}(E)-1 +\sum_{\gamma > 0} \frac{2\Re (Z_{\gamma})}{\frac 14 + \gamma^2},  $$
where $\gamma$ runs over the imaginary parts of the non-trivial zeros of $L(E,s)$, and the $Z_{\gamma}$ are identically distributed random variables, uniform on the unit circle in $\mathbb C$. We will then compute the first two moments of $X_E$. While the $Z_{\gamma}$ are not necessarily independent (unless we assume a linear independence hypothesis), one can show that they have no covariance: if $\lambda > \gamma>0$, then Cov$(Z_{\gamma},Z_{\lambda})=0$. This explains the simple formula for the variance appearing in Lemma \ref{lemma first two moments}. Finally, we will see that if $r_{\text{an}}(E)$ is significantly larger than $\sqrt{\log N_E}$, then the mean of $X_E$ is significantly larger than its standard deviation, resulting in a very large bias by Chebyshev's inequality. 

The fundamental tool we will use is the explicit formula for $L(E,s)$. We start with a technical estimate for the tail of a sum over zeros of $L(E,s+\frac 12)$.

\begin{lemma}
We have for $x,T \geq 2$ that
$$ \sum_{ |\Im(\rho)| > T} \frac{x^{\rho}}{\rho} \ll \log x + \frac xT \left( (\log x)^2+ \frac{\log (TN_E)}{\log x} \right), $$
where $\rho$ runs over the non-trivial zeros of $L(E,s+\frac 12)$.
\label{lemma tail sum over zeros}
\end{lemma}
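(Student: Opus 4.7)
The plan is to interpret the tail sum as the error term in the truncated explicit formula for $L(E,s+\tfrac12)$. Writing $\Lambda_E(n)/\sqrt n$ for the Dirichlet coefficients of $-(L'/L)(E,s+\tfrac12)$ and $\psi_E(x):=\sum_{n\leq x}\Lambda_E(n)/\sqrt n$, my goal is to establish the truncated explicit formula
\[
  \psi_E(x) \;=\; -\sum_{|\Im\rho|\leq T}\frac{x^\rho}{\rho} \;+\; O\!\left(\log x + \frac{x}{T}\Bigl((\log x)^2 + \frac{\log(TN_E)}{\log x}\Bigr)\right)
\]
together with the unconditional identity $\psi_E(x)=-\sum_\rho x^\rho/\rho$ (in the symmetric sense $\lim_{U\to\infty}\sum_{|\Im\rho|\leq U}$), whose difference then gives exactly the claimed bound on $\sum_{|\Im\rho|>T}x^\rho/\rho$.

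To obtain the truncated formula I would proceed by a standard two-step contour argument. First, apply the classical truncated Perron formula at the line $\Re(s)=1+1/\log x$, expressing $\psi_E(x)$ as the contour integral of $(-L'/L)(E,s+\tfrac12)\,x^s/s$ from $c-iT$ to $c+iT$ up to an error of $O(\log x + x(\log x)^2/T)$, which already supplies the first two summands of the bound. Second, shift the contour to $\Re(s)=-1$; the residues at non-trivial zeros of $L(E,s+\tfrac12)$ with $|\Im\rho|\leq T$ contribute $-\sum_{|\Im\rho|\leq T}x^\rho/\rho$, the residue at $s=0$ contributes an $O(\log x)$ term, and the left vertical contour is negligible by trivial bounds on $L'/L$ combined with the functional equation. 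The horizontal segments at heights $\pm T$ produce the third summand: using the partial-fraction expansion of $L'/L$ together with the Riemann--von Mangoldt density bound $N_E(t+1)-N_E(t)\ll\log((|t|+2)N_E)$ gives $|L'/L(E,\sigma+\tfrac12\pm iT)|\ll \log^2(TN_E)$ after perturbing $T$ by at most $1$, and integrating against $x^\sigma/T$ along $\sigma\in[-1,c]$ (noting $\int_{-1}^{c}x^\sigma\,d\sigma\ll x/\log x$) produces a horizontal contribution of the form $x\log(TN_E)/(T\log x)$.

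The main obstacle will be the horizontal contour estimate, since $L'/L$ has poles precisely at the zeros we are trying to enumerate. The standard workaround is to replace $T$ by some $T'\in[T,T+1]$ staying at distance $\gg 1/\log(TN_E)$ from every imaginary part of a zero, which is possible by the density bound; the zeros gained or lost by this perturbation number $O(\log(TN_E))$ and contribute a sum comfortably within the allowed error. After this adjustment the partial-fraction bound $\log^2(TN_E)$ on the horizontal line is in force, and splitting the $\sigma$-integral into the ranges $\sigma>1$ (where $L'/L=O(1)$), $\sigma\in[0,1]$ (the critical strip), and $\sigma<0$ (where the functional equation reduces $L'/L$ to $\Gamma$-factor growth of order $\log(|t|N_E)$) keeps the dominant contribution to the promised size. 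All other ingredients---convergence of $\sum_\rho x^\rho/\rho$ in the symmetric sense, the vertical contour estimate, and the Perron-error bookkeeping---are routine specializations of the classical contour argument (as in Davenport, Ch.~17) to the automorphic $L$-function $L(E,s+\tfrac12)$.
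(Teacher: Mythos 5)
Your proposal is correct and takes essentially the same route as the paper: the truncated Perron formula on the line $\Re(s)=1+\tfrac{1}{\log x}$ supplies the terms $\log x+\tfrac{x}{T}(\log x)^2$, and a leftward contour shift whose horizontal segments at height $\pm T$ (perturbed to stay $\gg 1/\log(TN_E)$ from ordinates of zeros, where $L'/L$ is controlled via its partial-fraction expansion and the Riemann--von Mangoldt density bound) supplies the term $\tfrac{x}{T}\cdot\tfrac{\log(TN_E)}{\log x}$. The only difference is organizational: the paper subtracts the $T$-truncated vertical integral from the full vertical integral and then shifts the contour of the remaining tail, rather than subtracting the truncated explicit formula from the full one, but the resulting error analysis is identical.
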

\begin{proof}
We first write
\begin{align*}
L\left(E,s+\frac 12 \right)&=\prod_{p\mid N_E} \left( 1-\frac {a_p(E)}{p^{s+\frac 12}}\right)^{-1}  \prod_{p\nmid N_E} \left(1-\frac{a_p(E)}{p^{s+\frac 12}} + \frac 1{p^{2s}} \right)^{-1} \\ &= \prod_{p\mid N_E} \left( 1-\frac {a_p(E)}{p^{s+\frac 12}}\right)^{-1}  \prod_{p\nmid N_E} \left(1-\frac{\alpha_p}{p^{s}}\right)^{-1} \left(1 - \frac {\beta_p}{p^{s}} \right)^{-1},
\end{align*}
where $\beta_p=\overline{\alpha_p}$, $|\alpha_p|=|\beta_p|=1$ and $\alpha_p+\beta_p=a_p(E)/\sqrt p$. The profound work of Wiles \cite{Wil}, Taylor and Wiles \cite{TaWi} and Breuil, Conrad, Diamond and Taylor \cite{BCDT} shows that the function $L(E,s)$ is a modular $L$-function, and hence it can be extended to an entire function. Following the proof of Theorem 6.9 of \cite{MV} we obtain using the truncated Perron Formula that for $x,T\geq 2$,
$$ \sum_{\substack{p^e\leq x \\ e\geq 1 \\ p\nmid N_E}} (\alpha_p^e+\beta_p^e) \log p +\sum_{\substack{ p^e \leq x \\ e\geq 1 \\ p\mid N_E}} \frac{a_p(E)^e}{p^{e/2}} \log p =\int_{\substack{\Re (s)=1+\frac 1{\log x} \\ |\Im (s)|\leq T}} \frac{L'(E,s+\frac 12)}{L(E,s+\frac 12)}x^s \frac{ds}{s}+ O\left( \log x + \frac xT (\log x)^2 \right).$$
In particular we have that
$$  \sum_{\substack{p^e\leq x \\ e\geq 1 \\ p\nmid N_E}} (\alpha_p^e+\beta_p^e) \log p +\sum_{\substack{ p^e \leq x \\ e\geq 1 \\ p\mid N_E}} \frac{a_p(E)^e}{p^{e/2}}  \log p  =\int_{\substack{\Re (s)=1+\frac 1{\log x} }} \frac{L'(E,s+\frac 12)}{L(E,s+\frac 12)}x^s \frac{ds}{s}+ O( \log x ), $$
and hence subtracting these two estimates we obtain
\begin{equation}
\int_{\substack{\Re (s)=1+\frac 1{\log x} \\ |\Im (s)| > T}} \frac{L'(E,s+\frac 12)}{L(E,s+\frac 12)}x^s \frac{ds}{s}  \ll \log x+ \frac xT (\log x)^2.  
\label{equation top of integral}
\end{equation}
Now, if $s$ is at a distance $\gg (\log N_E)^{-1}$ from the zeros of $L(E,s+\frac 12)$, then (5.27) and (5.28) of \cite{IwKo} give the bound 
$$ \frac{L'(E,s+\frac 12)}{L(E,s+\frac 12)} \ll \log (N_E (|\Im(s)|+2)). $$
Using this bound, \eqref{equation top of integral} becomes
$$  \sum_{ |\Im(\rho)| > T} \frac{x^{\rho}}{\rho}  +  \int_{\substack{ -\infty<\Re(s)\leq 1+\frac 1{\log x} \\ |\Im (s)|=T }}  \frac{L'(E,s+\frac 12)}{L(E,s+\frac 12)}x^s \frac{ds}{s} \ll \log x+ \frac xT (\log x)^2,$$
where the contour of integration of the last integral should be slightly perturbed to a contour $C$ which is at a distance $\gg (\log (N_ET))^{-1}$ from each zero of $L(E,s)$ (this is possible thanks to the Riemann-von Mangoldt Formula and the zero-free region of $L(E,s)$), and thus
$$\sum_{ |\Im(\rho)| > T} \frac{x^{\rho}}{\rho}  \ll  \int_{C} \log (N_E T)  x^{\Re(s)} \frac{|ds|}{|s|} +\log x+ \frac xT (\log x)^2\ll \log x + \frac xT \left( \frac{\log (N_E T)}{\log x} + (\log x)^2\right). $$

\end{proof}

The main tool we will use is the explicit formula (see the corresponding (13) of \cite{Sar}).
\begin{lemma}
\label{lemma explicit formula}
Assume the Riemann Hypothesis for $L(E,s)$. Then we have for $x,T\geq 2$ that
\begin{align}
E(x):&=-\frac{\log x}{\sqrt x}\sum_{p\leq x} \frac{a_p(E) }{\sqrt p} = 2r_{\text{an}}(E)-1+\sum_{\gamma_E \neq 0} \frac{e^{i\gamma_E \log x}}{\frac 12+i\gamma_E} + o_E(1)\label{equation lemma explicit formula}
\\
&= 2r_{\text{an}}(E)-1+\sum_{0<|\gamma_E| \leq T} \frac{e^{i\gamma_E \log x}}{\frac 12+i\gamma_E} +O\left(\frac {\sqrt x}T (\log(xT N_E))^2 \right)+ o_{x\rightarrow \infty}(1),
\label{equation lemma explicit formula with error term}
\end{align}
where $\gamma_E$ runs over the imaginary parts of the non-trivial zeros of $L(E,s)$.
\end{lemma}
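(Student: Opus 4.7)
My approach is the standard contour-shift explicit formula, applied to $L(E, s+\tfrac 12)$. First I would apply the truncated Perron formula to the Dirichlet series
$$-\frac{L'(E, s+\tfrac 12)}{L(E, s+\tfrac 12)} = \sum_{n \geq 1} \frac{\Lambda_E(n)}{n^s},$$
where $\Lambda_E(p^e) = (\alpha_p^e + \beta_p^e) \log p$ for $p \nmid N_E$ and the analogous expression with single Satake parameter $a_p(E)/\sqrt p$ for bad primes. At $c = 1 + 1/\log x$ this gives
$$\sum_{p^e \leq x} \Lambda_E(p^e) = -\frac{1}{2\pi i}\int_{c - iT}^{c + iT} \frac{L'(E, s+\tfrac 12)}{L(E, s+\tfrac 12)} \frac{x^s}{s}\,ds + O\!\left(\log x + \frac{x\log^2 x}{T}\right),$$
which is essentially already established en route to Lemma \ref{lemma tail sum over zeros}.

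Next I would shift the contour to a far-left vertical line $\Re(s) = -U$. Under ECRH the non-trivial zeros of $L(E, s+\tfrac 12)$ all lie on $\Re(s) = \tfrac 12$, and the residue theorem captures: (i) at $s = \tfrac 12$, a zero of order $r_{\text{an}}(E)$ contributing $-2r_{\text{an}}(E)\sqrt x$; (ii) at each non-central zero $\tfrac 12 + i\gamma_E$ of multiplicity $m_{\gamma_E}$, a residue of $-m_{\gamma_E}\, x^{1/2 + i\gamma_E}/(\tfrac 12 + i\gamma_E)$; (iii) the simple pole of $x^s/s$ at $s = 0$, contributing $-L'/L(E, \tfrac 12) = O_E(1)$; (iv) the trivial zeros and the vertical integral at $\Re(s) = -U$, which combine to another $O_E(1)$ via the functional equation and the standard bounds on $L'/L$ already invoked in the proof of Lemma \ref{lemma tail sum over zeros}. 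I would then apply Lemma \ref{lemma tail sum over zeros} directly to truncate the zero sum to $|\gamma_E| \leq T$, with tail bounded by $O(x\log^2(xTN_E)/T + \log x)$.

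The remaining step extracts the $e = 1$ contribution on the left. The $e \geq 3$ terms are trivially $O(x^{1/3}\log x) = o(\sqrt x)$, and the $e = 2$ terms equal $\sum_{p \leq \sqrt x,\,p \nmid N_E}(a_p(E)^2/p - 2)\log p + O_E(\log N_E)$. I would evaluate this via the prime number theorem for $L(\text{Sym}^2 E, s)$: since $a_p(\text{Sym}^2 E) = a_p(E)^2 - p$ for $p \nmid N_E$ and $L(\text{Sym}^2 E, s)$ is holomorphic and non-vanishing at $s = 1$ (for non-CM $E$ by Shimura and Gelbart--Jacquet, and by the Hecke character decomposition in the CM case), one obtains $\sum_{p \leq y}(a_p(E)^2/p - 2)\log p = -y + o_E(y)$; this is exactly the "$+\sqrt x$" that, after moving to the right-hand side and dividing by $-\sqrt x$, becomes the constant "$-1$" in the main term of $E(x)$. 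Finally, Abel summation converts $\sum_{p \leq x}(a_p(E)/\sqrt p)\log p$ into $\sum_{p \leq x} a_p(E)/\sqrt p$: writing $S(x) := \sum_{p \leq x} a_p(E)/\sqrt p$, the integral correction $\int_2^x S(t)/t\,dt = O(\sqrt x/\log x)$ by a bootstrap from the crude bound $S(t) = O(\sqrt t)$. Multiplying through by $-\log x/\sqrt x$ produces
$$E(x) = 2r_{\text{an}}(E) - 1 + \sum_{0 < |\gamma_E| \leq T} \frac{e^{i\gamma_E \log x}}{\tfrac 12 + i\gamma_E} + O\!\left(\frac{\sqrt x}{T}(\log(xTN_E))^2\right) + o_E(1),$$
which is \eqref{equation lemma explicit formula with error term}; letting $T \to \infty$ sufficiently slowly with $x$ gives \eqref{equation lemma explicit formula}.

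The main obstacle will be cleanly isolating the constant "$-1$" in the main term, which requires the PNT for $L(\text{Sym}^2 E, s)$: unlike the classical zeta-function explicit formula, where the analogous constant arises directly from the pole of $\zeta(s)$ at $s = 1$, here it encodes the behaviour of $L(E \otimes E, s) = L(\text{Sym}^2 E, s)\zeta(s)$ near $s = 1$ and requires careful tracking of the analytic properties of the symmetric square. A secondary, purely technical point is the uniform control of $L'/L$ on the vertical and horizontal pieces of the shifted contour, but this follows from the same Iwaniec--Kowalski bounds already used in the proof of Lemma \ref{lemma tail sum over zeros}.
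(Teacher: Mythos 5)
Your overall route is the same as the paper's: the paper invokes (5.53) of Iwaniec--Kowalski (which is precisely the output of your Perron-plus-contour-shift step) for $L(E,s+\frac12)$, truncates the zero sum via Lemma \ref{lemma tail sum over zeros}, extracts the $e=2$ prime powers via the identity $\alpha_p^2+\beta_p^2=(\alpha_p^2+\alpha_p\beta_p+\beta_p^2)-1$ together with a Tauberian argument on $L(\mathrm{Sym}^2E,s+1)$ and the Prime Number Theorem to produce the constant $-1$, and finishes with partial summation as in Lemma 2.1 of \cite{RubSar}. Your identification of the central zero as the source of $2r_{\text{an}}(E)$ and your remark that non-vanishing of the symmetric square at the edge is needed (not just holomorphy) are both correct and, if anything, slightly more careful than the paper's phrasing.

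The one step whose justification would fail as written is the final Abel summation. You claim $\int_2^x S(t)\,t^{-1}\,dt = O(\sqrt x/\log x)$ ``by a bootstrap from the crude bound $S(t)=O(\sqrt t)$.'' First, the truly crude bound is only $S(t)=O(t/\log t)$, and what the explicit formula gives pointwise under ECRH is $S(t)\ll \sqrt t\,(\log t+\log N_E)$, not $O(\sqrt t)$. Second, and more seriously, even granting $S(t)\ll\sqrt t$ one only gets $\int_2^x S(t)\,t^{-1}\,dt \ll \sqrt x$, which after multiplying by $x^{-1/2}$ contributes $O(1)$ rather than $o(1)$ --- exactly the size of the main term you are trying to isolate. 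No pointwise bound on $S(t)$ (or on $\sum_{p\le t}a_p(E)p^{-1/2}\log p$) can rescue this; you must exploit oscillation, e.g.\ by substituting the explicit formula into the integral and integrating each $t^{i\gamma_E}$ by parts, which gains the needed factor of $\log x$ after summing $\sum_{\gamma_E}|\tfrac12+i\gamma_E|^{-2}<\infty$. This is precisely the content of Lemma 2.1 of \cite{RubSar}, which the paper cites for this step; as your argument stands, the claimed $o_E(1)$ error in \eqref{equation lemma explicit formula} is not established.
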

\begin{proof}
We start with the explicit formula for 
$$
L\left(E,s+\frac 12 \right)= \prod_{p\mid N_E} \left( 1-\frac {a_p(E)}{p^{s+\frac 12}}\right)^{-1}  \prod_{p\nmid N_E} \left(1-\frac{\alpha_p}{p^{s}}\right)^{-1} \left(1 - \frac {\beta_p}{p^{s}} \right)^{-1},
$$
where as before, $\beta_p=\overline{\alpha_p}$, $|\alpha_p|=|\beta_p|=1$ and $\alpha_p+\beta_p=a_p(E)/\sqrt p$. 
Taking $T=x$ in (5.53) of \cite{IwKo} and bounding the rest of the sum over zeros using Lemma \ref{lemma tail sum over zeros} we obtain the estimate
\begin{equation}
\sum_{\substack{p^e\leq x \\ e\geq 1 \\ p\nmid N_E}} (\alpha_p^e+\beta_p^e) \log p = -\sum_{\gamma_E} \frac{x^{\frac 12+i\gamma_{E}}}{ \frac 12+i\gamma_E} +O(\log x \log(xN_E)).
\label{equation formule explicite}
\end{equation} 
Using the trivial bound on the terms on the left-hand side with $e\geq 3$, this becomes
\begin{multline}
-x^{-\frac 12}\sum_{p\leq x} \frac{a_p(E)}{\sqrt p} \log p = x^{-\frac 12}\sum_{p\leq \sqrt x} (\alpha_p^2 + \beta_p^2) \log p + 2r_{\text{an}}(E) + \sum_{\gamma_E\neq 0} \frac{e^{i\gamma_E \log x}}{\frac 12+i\gamma_E} +O_E(x^{-\frac 16}).
\label{equation squares to determine}
\end{multline} 
Now, $L(\text{Sym}^2E,s+1)$ is holomorphic at $s=1$, and a Tauberian argument shows that
$$  \sum_{p\leq \sqrt x} (\alpha_p^2 +\alpha_p\beta_p+ \beta_p^2) \log p = o_E(\sqrt x), $$
which combined with \eqref{equation squares to determine},  the fact that $\alpha_p\beta_p=1$ and the Prime Number Theorem gives
$$ -x^{-\frac 12}\sum_{p\leq x} \frac{a_p(E)}{\sqrt p} \log p =  2r_{\text{an}}(E)-1+ \sum_{\gamma_E\neq 0} \frac{e^{i\gamma_E \log x}}{\frac 12+i\gamma_E} +o_{E}(1).   $$
The estimate \eqref{equation lemma explicit formula} follows by a summation by parts as in Lemma 2.1 of \cite{RubSar}, and \eqref{equation lemma explicit formula with error term} follows by applying Lemma \ref{lemma tail sum over zeros}.

\end{proof}

\begin{lemma}
Assume the Riemann Hypothesis for $L(E,s)$. Then the quantity $E(x)$ defined in Lemma \ref{lemma explicit formula} has a limiting logarithmic distribution, that is there exists a Borel measure $\mu_E$ on $\R$ such that for any bounded Lipschitz continuous function $f:\R \rightarrow \R$ we have
\begin{equation}
\lim_{Y\rightarrow \infty} \frac 1Y \int_2^{Y} f(E(e^y)) dy = \int_{\mathbb R} f(t) d\mu_E(t).
\label{equation lemma limiting distribution}
\end{equation} 
\label{lemma limiting distribution}
\end{lemma}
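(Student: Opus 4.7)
The plan is to follow the classical Wintner-Rubinstein-Sarnak approach of truncation plus equidistribution. Define the partial sums
$$E_T(y) := 2r_{\text{an}}(E) - 1 + \sum_{0<|\gamma_E|\leq T} \frac{e^{i\gamma_E y}}{\tfrac 12 + i\gamma_E}.$$
For each $T > 0$, $E_T$ is a finite sum of pure exponentials with frequencies $\gamma_E$, hence a Bohr almost-periodic function on $\mathbb R$. The classical Kronecker-Weyl equidistribution theorem applied to the orbit $y\mapsto (\gamma_E y \bmod 2\pi)_{0<\gamma_E\leq T}$ on the torus $(\mathbb R/2\pi\mathbb Z)^{N(T)}$ (where $N(T)=\#\{0<\gamma_E\leq T\}$) produces a limiting distribution $\mu_{E,T}$ on $\mathbb R$: for every bounded continuous $f:\mathbb R\to\mathbb R$,
$$\lim_{Y\to\infty}\frac{1}{Y}\int_0^Y f(E_T(y))\,dy = \int_{\mathbb R} f\,d\mu_{E,T}.$$

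Next, I would show $\{\mu_{E,T}\}_T$ converges weakly to some probability measure $\mu_E$. Expanding the square and using the orthogonality relation $\lim_{Y\to\infty} Y^{-1}\int_0^Y e^{i(\gamma-\gamma')y}\,dy = \mathbf{1}_{\gamma=\gamma'}$ yields
$$\lim_{Y\to\infty}\frac{1}{Y}\int_0^Y |E_{T_2}(y) - E_{T_1}(y)|^2\,dy = \sum_{T_1<|\gamma_E|\leq T_2} \frac{1}{\tfrac 14+\gamma_E^2}.$$
The Riemann-von Mangoldt formula $\#\{|\gamma_E|\leq T\} \ll T\log(TN_E)$ shows that $\sum_{|\gamma_E|>T_0}(\tfrac 14+\gamma_E^2)^{-1}$ converges and tends to $0$ as $T_0\to\infty$. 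Combined with Chebyshev's inequality and tightness (which follows from the uniform second-moment bound $\int t^2\,d\mu_{E,T}(t)\leq (2r_{\text{an}}(E)-1)^2 + \sum_{\gamma_E}(\tfrac 14+\gamma_E^2)^{-1}<\infty$), this gives the weak convergence $\mu_{E,T}\Rightarrow \mu_E$.

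The final step is to compare $E(e^y)$ with $E_T(y)$ and transfer the limit. By Cauchy-Schwarz, for any bounded Lipschitz $f$,
$$\Big|\frac{1}{Y}\int_2^Y f(E(e^y))\,dy - \frac{1}{Y}\int_2^Y f(E_T(y))\,dy\Big| \leq \|f\|_{\mathrm{Lip}}\Big(\frac{1}{Y}\int_0^Y |E(e^y) - E_T(y)|^2\,dy\Big)^{1/2}.$$
Showing that the right-hand side tends to $0$ in the iterated limit $\lim_{T\to\infty}\limsup_{Y\to\infty}$, combined with the weak convergence $\mu_{E,T}\Rightarrow\mu_E$, yields \eqref{equation lemma limiting distribution}.

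The main obstacle lies in this last $L^2$-approximation, because the error term in \eqref{equation lemma explicit formula with error term} is of size $O(e^{y/2}(\log e^yTN_E)^2/T)$, which grows with $y$ and is not small in $L^2$-mean on $[0,Y]$ unless $T$ grows fast with $Y$. The resolution is to introduce an auxiliary height $T'=T'(Y)\to\infty$ (say $T'(Y)=e^Y$) and write
$$E(e^y)-E_T(y)=\big(E(e^y)-E_{T'(Y)}(y)\big)+\big(E_{T'(Y)}(y)-E_T(y)\big),$$
making the first piece negligible in mean square by the choice of $T'(Y)$, and handling the second through the orthogonality argument above (with careful off-diagonal control using zero-counting bounds). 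A slicker alternative is to invoke a smooth-weighted variant of the explicit formula with better effective error, in the spirit of Rubinstein-Sarnak.
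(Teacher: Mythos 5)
Your proposal is correct and is essentially the argument the paper relies on: the paper's proof of this lemma is a one-line citation of Akbary--Ng--Shahabi [ANS], whose method is exactly the truncation / Kronecker--Weyl / $L^2$-approximation scheme you describe, and the paper itself carries out this scheme explicitly in the harder setting of Lemma \ref{lemma limiting distribution of E converse}, resolving the $Y$-dependent truncation issue you correctly identify as the main obstacle via the tail estimate of Lemma \ref{lemma converse epsilon_T} (the quantitative bound $\int_2^Y e^{i(\gamma-\gamma')y}\,dy \ll \min(Y,|\gamma-\gamma'|^{-1})$ plus the Riemann--von Mangoldt formula). The only cosmetic imprecision is that your diagonal term should carry the squared multiplicities $m(\gamma_E)^2$ when zeros are counted with multiplicity, which does not affect convergence.
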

\begin{proof}
This follows from \cite{ANS}.
\end{proof}

\begin{remark}
By taking $f$ to be identically one in \eqref{equation lemma limiting distribution} we deduce that $\mu_E(\mathbb R) =1$, that is $\mu_E$ is a probability measure.
\end{remark}

Let $X_E$ be the random variable associated to $\mu_E$. We will show that the moments of $E(e^y)$ agree with those of $X_E$.

\begin{lemma}
Assume the Riemann Hypothesis for $L(E,s)$. We have for $k\geq 1$ that 
$$ \lim_{Y\rightarrow \infty} \frac 1Y \int_2^{Y} E(e^y)^k dy = \int_{\mathbb R} t^k d\mu_E(t). $$
\label{lemma moments in terms of E(y)}
\end{lemma}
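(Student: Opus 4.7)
The plan is to combine the weak convergence statement of Lemma \ref{lemma limiting distribution} with a truncation argument. By Lemma \ref{lemma limiting distribution}, the averages $\frac{1}{Y}\int_2^Y f(E(e^y))\,dy$ converge to $\int f\,d\mu_E$ for every bounded Lipschitz $f$, but the function $t\mapsto t^k$ is unbounded, so we must first cut it off and then control the resulting tail. Fix a smooth cutoff $\eta_M:\mathbb{R}\to[0,1]$ equal to $1$ on $[-M,M]$ and vanishing outside $[-2M,2M]$. Then $f_M(t):=t^k\eta_M(t)$ is bounded and Lipschitz, so Lemma \ref{lemma limiting distribution} gives
$$\lim_{Y\to\infty}\frac{1}{Y}\int_2^Y f_M(E(e^y))\,dy = \int_{\mathbb{R}}f_M(t)\,d\mu_E(t).$$
The lemma reduces to showing that, as $M\to\infty$, both the tail on the left and the tail on the right vanish, and both of these follow from a uniform $L^{2k}$ bound
$$\sup_{Y\geq Y_0}\frac{1}{Y}\int_2^Y E(e^y)^{2k}\,dy \leq C(E,k) \qquad(\ast)$$
via Chebyshev's inequality, which controls the contribution from $|E(e^y)|\geq M$ by $M^{-k}C(E,k)$.

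To establish $(\ast)$, I would invoke the truncated explicit formula \eqref{equation lemma explicit formula with error term} to decompose
$$E(e^y) = (2r_{\text{an}}(E)-1) + A_T(y) + R_T(y), \qquad A_T(y):=\sum_{0<|\gamma_E|\leq T}\frac{e^{iy\gamma_E}}{\tfrac{1}{2}+i\gamma_E},$$
where $R_T(y)\ll_E e^{y/2}T^{-1}(\log(e^y TN_E))^2 + o_{y\to\infty}(1)$. Choosing $T=T(Y)$ large (for instance $T=e^YY^3$) makes $\sup_{2\leq y\leq Y}|R_T(y)|=o(1)$ as $Y\to\infty$, and by the elementary inequality $(a+b+c)^{2k}\ll_k a^{2k}+b^{2k}+c^{2k}$ the bound $(\ast)$ reduces to a uniform-in-$T$ control of $\frac{1}{Y}\int_2^Y A_T(y)^{2k}\,dy$.

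Expanding $A_T(y)^{2k}$ as a finite sum indexed by $2k$-tuples $(\gamma_{E,1},\ldots,\gamma_{E,2k})$ of nonzero imaginary parts, and integrating in $y$, the off-diagonal terms (those with $\gamma_{E,1}+\cdots+\gamma_{E,2k}\neq 0$) are $O_{E,k}(Y^{-1})$ after dividing by $Y$, while the diagonal terms are bounded by sums of the form $\bigl(\sum_{\gamma_E}(\tfrac{1}{4}+\gamma_E^2)^{-1}\bigr)^k$; this last quantity is finite by the Riemann--von Mangoldt count $\#\{\gamma_E:|\gamma_E|\leq T\}\ll T\log(TN_E)$, and crucially does not depend on the truncation parameter $T$. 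The main technical obstacle is the combinatorial analysis of the diagonal contribution when $k\geq 2$, where one gets additional vanishing-sum conditions beyond the obvious pairings $\gamma_{E,i}+\gamma_{E,j}=0$; these extra terms must be handled either by a Cauchy--Schwarz reduction to the $k=1$ case or (in the presence of BM) by using that the multiplicity of any relation among the $\gamma_E$ is bounded, which bounds the count of nontrivial vanishing subsums. Once $(\ast)$ is in hand, letting $M\to\infty$ in the truncated identity closes the argument.
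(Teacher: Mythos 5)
Your reduction of the lemma to the uniform moment bound $(\ast)$ via the cutoff $f_M$ is sound, and for $k=1$ your route could be made to work, since there a uniform second-moment bound suffices (essentially the variance computation of Lemma \ref{lemma first two moments}). The gap is in your proposed proof of $(\ast)$ for $k\geq 2$. When you expand $A_T(y)^{2k}$ and integrate, the diagonal consists of all $2k$-tuples of ordinates with $\gamma_1+\cdots+\gamma_{2k}=0$, and under the hypotheses of this lemma (only RH for $L(E,s)$; neither LI(E) nor even BM is assumed here) nothing controls the number of such additive relations beyond the trivial pairings. Your two suggested fixes do not close this: Cauchy--Schwarz (or H\"older) bounds lower moments by higher ones on a probability space, so it cannot reduce the $2k$-th moment to the second; and BM bounds the multiplicity of an individual zero, not the number of solutions to $\gamma_1+\cdots+\gamma_{2k}=0$ among distinct ordinates. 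A direct attempt to bound the diagonal by fixing $\gamma_1,\dots,\gamma_{2k-1}$ and counting the admissible $\gamma_{2k}$ leads to sums of the shape $\bigl(\sum_\gamma(1+|\gamma|)^{-1}\bigr)^{2k-1}$, which diverge. A secondary problem: your claim that the off-diagonal contribution is $O(1)$ before dividing by $Y$ is too strong even for $k=1$; near-coincident ordinates with $0<|\gamma-\lambda|\le 1$ can make the sum of $\min(Y,|\gamma-\lambda|^{-1})(1+|\gamma|)^{-1}(1+|\lambda|)^{-1}$ large, which is exactly why the paper's variance computation only obtains $o(Y)$ there and needs an ineffective device (the functions $S(U)$ and $U(Y)$) to do so.

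The paper avoids all of this by obtaining the tail control from the measure side rather than from moments of the Dirichlet polynomial: it tests the weak convergence of Lemma \ref{lemma limiting distribution} against a Lipschitz approximation $H_S$ of the indicator of $\{|t|\ge S\}$, invokes the Rubinstein--Sarnak-type bound $\mu_E((-\infty,-S]\cup[S,\infty))\ll_E \exp(-c_E\sqrt S)$, and converts this into $\limsup_Y \frac1Y\int_{|E(e^y)|\ge S}|E(e^y)|\,dy\ll_E \exp(-c_E'\sqrt S)$ by a dyadic decomposition; since the exponential decay beats any power of $S$, this handles every $k$ at once. To salvage your approach you should import that superpolynomial tail bound for $\mu_E$ (and for the time averages) rather than trying to prove $(\ast)$ by expanding the $2k$-th power.
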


\begin{proof}

We will only prove the $k=1$ case since the general result follows along the same lines. Let $S\geq 1$ and define the bounded Lipschitz continuous function
$$H_S(t):= \begin{cases}
0 &\text{ if } |t| \leq S \\
|t|-S &\text{ if } S < |t| \leq S+1 \\
1 &\text{ if } |t|>S+1.
\end{cases} $$
By Lemma \ref{lemma limiting distribution} we have that
$$ \lim_{Y\rightarrow \infty} \frac 1Y \int_2^Y H_S(E(e^y)) dy = \int_{\mathbb R} H_S(t) d\mu_E(t) \leq \mu_E((-\infty,-S] \cup [S,\infty)).  $$
In a similar way to Theorem 1.2 of \cite{RubSar}, one can show that $\mu_E$ has exponentially small tails: 
$$ \mu_E((-\infty,-S] \cup [S,\infty)) \ll_E \exp(-c_E\sqrt S),$$
from which we obtain that
$$\limsup_{Y\rightarrow \infty} \frac 1Y \int_{\substack{2\leq y \leq Y \\ |E(e^y)| > S+1}} dy \leq \lim_{Y\rightarrow \infty} \frac 1Y \int_2^Y H_S(E(e^y)) dy  \ll_E \exp(-c_E\sqrt S). $$
By using dyadic intervals we easily show that this bound implies
\begin{equation}
\limsup_{Y\rightarrow \infty} \frac 1Y \int_{\substack{2\leq y \leq Y \\ |E(e^y)| \geq S}} |E(e^y)| dy \ll_E  \exp(-c'_E\sqrt S). \label{equation bound on tails}
\end{equation}
Therefore, defining the bounded Lipschitz continuous function 
$$G_S(t):= \begin{cases}
t &\text{ if } |t| \leq S \\
S(S+1-|t|) &\text{ if } S < |t| \leq S+1 \\
0 &\text{ if } |t|>S+1,
\end{cases} $$
we obtain using \eqref{equation bound on tails} and Lemma \ref{lemma limiting distribution} that
\begin{multline*} \limsup_{Y\rightarrow \infty} \frac 1Y \int_2^Y E(e^y) dy = \limsup_{Y\rightarrow \infty} \frac 1Y \int_2^Y G_S(E(e^y)) dy + O_E(\exp(-c''_E\sqrt S) ) 
\\ = \int_{\mathbb R} G_S(t)d\mu_E(t) + O_E(\exp(-c''_E\sqrt S) ) =  \int_{\mathbb R} td\mu_E(t) + O_E(\exp(-c'''_E\sqrt S) ),
 \end{multline*}
 and so taking $S\rightarrow \infty$ we obtain that 
 $$ \limsup_{Y\rightarrow \infty} \frac 1Y \int_2^Y E(e^y) dy = \int_{\mathbb R} td\mu_E(t). $$
The same argument works for the $\liminf$, and hence the assertion is proved. 
\end{proof}

We now explicitly compute the first two moments of $X_E$, the random variable associated to the measure $\mu_E$. This is analogous to Schlage-Puchta's result \cite{Pu}.
\begin{lemma}
\label{lemma first two moments}
Assume the Riemann Hypothesis for $L(E,s)$. Then,
$$ \E[X_E]= 2r_{\text{an}}(E)-1, \hspace{1cm} \text{\emph{Var}}[X_E] = \sum_{\gamma_E\neq 0}^* \frac{m(\gamma_E)^2}{\frac 14+\gamma_E^2},$$
where the last sum runs over the imaginary parts of the non-trivial zeros of $L(E,s)$, the star meaning that we count the zeros without multiplicity, and
$m(\gamma_E)$ denotes the multiplicity of the zero $\rho_E=1+i\gamma_E$.

\end{lemma}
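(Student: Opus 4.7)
My plan is to substitute the truncated explicit formula of Lemma~\ref{lemma explicit formula} into the moment characterization of Lemma~\ref{lemma moments in terms of E(y)}, choose the truncation $T=T(Y)$ as a function of $Y$ to neutralize the error, and compute each moment via orthogonality of the exponentials $e^{i\gamma y}$ in the time average. Writing $c := 2r_{\text{an}}(E)-1$ and $S_T(y) := \sum_{0 < |\gamma_E| \leq T} e^{i\gamma_E y}/(\tfrac 12 + i\gamma_E)$ (zeros counted with multiplicity), equation \eqref{equation lemma explicit formula with error term} reads $E(e^y) = c + S_T(y) + R_{T,y}$ with $R_{T,y} \ll e^{y/2}T^{-1}\log^2(yTN_E) + o_{y\to\infty}(1)$. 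The choice $T = T(Y) := e^Y$ makes $R_{T,y} = o(1)$ uniformly for $y \in [2,Y]$ as $Y \to \infty$.

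For the first moment, averaging the decomposition over $[2,Y]$: the constant contributes $c$, the error contributes $o(1)$, and the oscillatory sum satisfies
$$Y^{-1}\int_2^Y S_T(y)\,dy = \sum_{0 < |\gamma_E| \leq T} \frac{1}{\tfrac 12 + i\gamma_E}\cdot\frac{e^{i\gamma_E Y} - e^{2i\gamma_E}}{i\gamma_E Y} \ll_E Y^{-1}\sum_\rho |\rho|^{-2} \ll_E Y^{-1},$$
using absolute convergence of $\sum_\rho |\rho|^{-2}$ from Riemann--von Mangoldt. Hence $\E[X_E] = c = 2r_{\text{an}}(E) - 1$.

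For the second moment, squaring the decomposition produces six terms. The $c^2$ term averages to $c^2$; the $2cS_T$ term is again $O_E(1/Y)$; and each term involving $R_{T,y}$ averages to $o(1)$ by Cauchy--Schwarz together with the uniform bound $R_{T,y}=o(1)$ and a uniform-in-$T$ $L^2$ bound on $S_T$ (which once more follows from $\sum_\rho |\rho|^{-2}<\infty$). The heart of the computation is
$$Y^{-1}\int_2^Y S_T(y)^2 \, dy = \sum_{\gamma_1,\gamma_2} \frac{1}{(\tfrac 12 + i\gamma_1)(\tfrac 12 + i\gamma_2)}\cdot\frac{1}{Y}\int_2^Y e^{i(\gamma_1+\gamma_2)y}\,dy,$$
where the inner integral equals $1$ on the diagonal $\gamma_1+\gamma_2=0$ and is $O((Y|\gamma_1+\gamma_2|)^{-1})$ otherwise. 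Self-duality of $L(E,s)$ forces $m(-\gamma)=m(\gamma)$, so each distinct pair $\{\gamma,-\gamma\}$ contributes $m(\gamma)^2/(\tfrac 14+\gamma^2)$, and summing over distinct nonzero imaginary parts gives exactly $\sum_{\gamma_E\neq 0}^* m(\gamma_E)^2/(\tfrac 14+\gamma_E^2)$. Subtracting $\E[X_E]^2 = c^2$ from $\E[X_E^2]$ then yields the claimed variance.

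The principal difficulty is controlling the off-diagonal contribution to $Y^{-1}\int_2^Y S_T(y)^2\,dy$: because $T(Y)=e^Y$ grows rapidly, near-cancellations $\gamma_2 \approx -\gamma_1$ can make $|\gamma_1+\gamma_2|^{-1}$ enormous, and the crude bound via $\sum (Y|\gamma_1+\gamma_2|)^{-1}$ is not a priori convergent. Following the Rubinstein--Sarnak strategy \cite{RubSar} and the closely analogous calculation of Schlage-Puchta \cite{Pu}, I would stratify pairs $(\gamma_1,\gamma_2)$ by dyadic ranges of $|\gamma_1+\gamma_2|$ and invoke the short-interval zero count $\#\{\rho : |\Im\rho-t|\leq 1\} \ll \log(|t|+N_E)$ (from the Riemann--von Mangoldt formula) to bound the inner sum $\sum_{\gamma_2: \gamma_2\neq -\gamma_1} m(\gamma_2)(|\gamma_2|+1)^{-1}|\gamma_1+\gamma_2|^{-1}$ by $O(\log^2(TN_E))$; summing over $\gamma_1$ and dividing by $Y$ then yields an off-diagonal total of $O_E(Y^{-1}(\log T)^A)=o(1)$ as $Y\to\infty$, completing the variance calculation.
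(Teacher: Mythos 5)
Your first-moment computation is correct and matches the paper. The gap is in the variance: your proposed treatment of the off-diagonal terms does not work. The short-interval zero count $\#\{\rho : |\Im\rho - t|\le 1\} \ll \log(N_E(|t|+2))$ bounds how many ordinates lie near $-\gamma_1$, but it gives no \emph{lower} bound on $|\gamma_1+\gamma_2|$ for distinct ordinates -- two zeros could, for all we know, lie at distance $e^{-e^{T}}$ apart -- so the single nearest term already defeats your claimed bound $O(\log^2(TN_E))$ for the inner sum $\sum_{\gamma_2}m(\gamma_2)(|\gamma_2|+1)^{-1}|\gamma_1+\gamma_2|^{-1}$. Dyadic stratification in $|\gamma_1+\gamma_2|$ cannot repair this, because there is no bound on the number of dyadic ranges needed nor on the contribution of the smallest ones. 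Moreover, even granting that bound, your arithmetic fails: with $T=e^Y$ one has $Y^{-1}(\log T)^A = Y^{A-1}$, which is not $o(1)$ for $A\ge 1$ (and summing over $\gamma_1$ up to $T$ costs further powers of $\log T = Y$).

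The paper's resolution is different and worth noting. One keeps the bound $\min(Y,|\gamma_1+\gamma_2|^{-1})$ for the oscillatory integral and splits the off-diagonal pairs three ways: well-separated pairs ($|\gamma_1+\gamma_2|\ge 1$), whose total converges absolutely; close pairs with both ordinates above a threshold $U$, where one uses the crude bound $Y$ and the tail estimate $Y\sum_{|\gamma|>U}\log|\gamma|/(1+|\gamma|)^2 \ll Y(\log U)^2/U$; and close pairs with both ordinates below $U$, whose contribution $S(U)$ is a \emph{finite} but completely uncontrolled quantity. The key device is then ineffective: since $S(U)$ is finite for each $U$, one can choose $U=U(Y)\to\infty$ growing slowly enough that $U(Y)S(U(Y))\le Y$, which makes all three pieces $o(Y)$. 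Some such ineffectivity (or an unproved gap hypothesis on the zeros) appears unavoidable here, and your proposal is missing exactly this ingredient. Separately, your appeal to "a uniform-in-$T$ $L^2$ bound on $S_T$" to handle the cross terms with $R_{T,y}$ is essentially the quantity you are in the middle of estimating; the paper handles this cross term by Cauchy--Schwarz after the diagonal evaluation, which is the cleaner order of operations.
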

\begin{proof}
By Lemma \ref{lemma explicit formula} we have that 
\begin{align*}\int_2^{Y} E(e^y) dy &= (2r_{\text{an}}(E)-1)(Y-2) + \sum_{\gamma_E\neq 0} \frac 1{ \frac 12+i\gamma_E} \int_2^{Y} e^{i\gamma_E y} dy +o(Y)  \\
&= (2r_{\text{an}}(E)-1)(Y-2) + O_E\left(  1 \right)+o(Y),
\end{align*}
since the sum $\sum_{\gamma_E\neq 0} \frac 1{|\frac 12+i\gamma_E|\gamma_E}$ converges. Taking $Y \rightarrow \infty$ and applying Lemma \ref{lemma moments in terms of E(y)} gives that
$$\E[X_E] = \lim_{Y\rightarrow \infty} \frac 1Y \int_2^{Y} E(e^y) dy= 2r_{\text{an}}(E)-1. $$

As for the second assertion, it follows from Plancherel's identity for Besicovitch $B^2$ almost-periodic functions. In an effort to be more self-contained we include a proof which follows \cite{Pu}. We use Lemma \ref{lemma explicit formula} again. Letting $\gamma$ and $\lambda$ run trough the ordinates of the non-trivial zeros of $L(E,s)$, we have

\begin{align*} \int_2^{Y} \left| \sum_{0<|\gamma_E|\leq T} \frac{e^{i\gamma_E y}}{\frac 12+i\gamma_E} \right|^2 dy &= \sum^*_{\substack{ 0< |\gamma|, |\lambda| \leq T }} \frac {m(\gamma)m(\lambda)}{(\frac 12+i\gamma)(\frac 12-i\lambda)} \int_2^{Y} e^{iy(\gamma-\lambda)} dy \\ &= (Y-2) \sum^*_{0< |\gamma_E|\leq T} \frac{m(\gamma_E)^2}{\frac 14+\gamma_E^2} +O\left( \sum_{\substack{ 0<|\gamma|,|\lambda| \leq T \\ \gamma\neq \lambda}} \frac {\min\{Y, |\gamma-\lambda|^{-1}\}}{(1+|\gamma|)(1+|\lambda|)}   \right).
\end{align*}
(Note that we have removed the star in the sum in the error term, which explains why the multiplicities dissapeared.)
The first sum converges absolutely, since the Riemann-von Mangoldt formula (see Theorem 5.8 of \cite{IwKo})
$$ N(T,E) :=\#\{\gamma_E : 0\leq \gamma_E \leq T \}= \frac T{2\pi} \log \left(\frac{N_E T}{2\pi e} \right)  + O(\log (N_E T))  $$
implies that $m(\gamma_E) \ll \log(N_E(3+|\gamma_E|))$. Introducing a parameter $1\leq U< T$, the sum appearing in the error term is at most:
\begin{align}
&\sum_{\substack{0<|\gamma|,|\lambda| \leq T \\ |\gamma-\lambda|\geq 1 }} \frac {|\gamma-\lambda|^{-1}}{(1+|\gamma|)(1+|\lambda|)} + \sum_{\substack{U<|\gamma|,|\lambda| \leq T \\ |\gamma-\lambda|\leq 1 }} \frac {Y}{(1+|\gamma|)(1+|\lambda|)}+\sum_{\substack{0<|\gamma|,|\lambda| \leq U \\ 0<|\gamma-\lambda|\leq 1 }} \frac {|\gamma-\lambda|^{-1}}{(1+|\gamma|)(1+|\lambda|)} \notag \\
&\ll_E \sum_{\substack{\gamma,\lambda\\ |\gamma-\lambda|\geq 1 }} \frac {|\gamma-\lambda|^{-1}}{(1+|\gamma|)(1+|\lambda|)} + Y \sum_{U\leq |\gamma| \leq T} \frac{\log |\gamma|}{ (1+|\gamma|)^2} + S(U) \notag\\
& \ll_E 1 + Y \frac{(\log U)^2}{U} + S(U), \label{last equation in error term}
\end{align} 
since the integral $\iint_{|x-y|\geq 1} \frac{|x-y|^{-1} \log x \log y}{(|x|+1)(|y|+1)} dxdy$ converges. Here, $$S(U):=\sum_{\substack{0<|\gamma|,|\lambda| \leq U \\ 0<|\gamma-\lambda|\leq 1 }} \frac {|\gamma-\lambda|^{-1}}{(1+|\gamma|)(1+|\lambda|)}.$$ 

Define $Y_U\geq U^2$ to be an increasing function of $U$ such that for each $U\geq 1$, $US(U)\leq Y_U$ (this is ineffective). Inverting this process, we find an increasing function $U(Y)\leq \sqrt Y$ such that $U(Y)\rightarrow \infty$ as $Y\rightarrow \infty$, and such that for $Y$ large enough, $U(Y)S(U(Y)) \leq Y$. This shows that \eqref{last equation in error term} is 
$$ \ll 1 + Y \frac{(\log U(Y))^2}{U(Y)} + \frac{Y}{U(Y)} = o_{Y \rightarrow \infty}(Y).  $$
That is, we have shown that
 $$\int_2^{Y} \left| \sum_{0<|\gamma_E|\leq T} \frac{e^{i\gamma_E y}}{\frac 12+i\gamma_E} \right|^2 dy = (Y-2) \sum^*_{0< |\gamma_E|\leq T} \frac{m(\gamma_E)^2}{\frac 14+\gamma_E^2} +o_{Y \rightarrow \infty}(Y).$$
Therefore, by Lemma \ref{lemma explicit formula} we obtain by taking $T=e^{2Y}$ that
\begin{align*} \int_2^{Y} |E(e^y) - \E[X_E]|^2 dy &= \int_2^{Y} \left| \sum_{0<|\gamma_E|\leq e^{2Y}} \frac{e^{i\gamma_E y}}{\frac 12+i\gamma_E} \right|^2 dy \\
&\hspace{2cm} + O_E\left( \int_2^Y\left| \sum_{0<|\gamma_E|\leq e^{2Y}} \frac{e^{i\gamma_E y}}{\frac 12+i\gamma_E} \right| o_{y\rightarrow \infty}(1) dy  + o_{Y\rightarrow \infty}(Y)\right)
\\&= (Y-2)\sum^*_{0<|\gamma_E| \leq e^{2Y}} \frac {m(\gamma_E)^2}{\frac 14+\gamma_E^2} +  o_{Y\rightarrow \infty}(Y),
\end{align*}
by the Cauchy-Schwartz inequality. 
The result follows by taking $Y\rightarrow \infty$ and applying Lemma \ref{lemma moments in terms of E(y)}.

\end{proof}

\begin{lemma}
\label{lemma bias factor}
Assume the Riemann Hypothesis for $L(E,s)$. If $$B(E):=\frac{\E[X_E]}{\sqrt{\V[X_E]}}$$
is large enough, then 
$$  \underline{\delta}(E) \geq 1-2\frac{\V[X_E]}{\E[X_E]^2}. $$

\end{lemma}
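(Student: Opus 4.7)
The plan is to translate the logarithmic density $\underline{\delta}(E)$ into a tail probability for the random variable $X_E$, then apply Chebyshev's inequality using the mean and variance computed in Lemma \ref{lemma first two moments}. The starting observation is that after the change of variables $t=e^y$, and noting that $S(e^y)$ has the same sign as $E(e^y) = (y/e^{y/2}) S(e^y)$ for $y>0$, we may rewrite
$$\underline{\delta}(E) = 1-\limsup_{Y\to\infty} \frac{1}{Y} \int_{\substack{0\le y \le Y \\ E(e^y) < 0}} dy + o(1),$$
since the total measure of $[0,Y]$ contributes $1$ to the density.

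Next I would approximate the discontinuous indicator $\mathbf{1}_{t<0}$ from above by a bounded Lipschitz continuous function, as this is what Lemma \ref{lemma limiting distribution} accepts. For each $\epsilon>0$, take $g_\epsilon$ that equals $1$ on $(-\infty,0]$, equals $0$ on $[\epsilon,\infty)$, and interpolates linearly on $[0,\epsilon]$. Since $\mathbf{1}_{t<0} \le g_\epsilon(t)$ pointwise, Lemma \ref{lemma limiting distribution} gives
$$\limsup_{Y\to\infty} \frac{1}{Y} \int_{\substack{0\le y \le Y \\ E(e^y) < 0}} dy \;\le\; \lim_{Y\to\infty} \frac{1}{Y}\int_0^Y g_\epsilon(E(e^y))\,dy \;=\; \int_\R g_\epsilon\, d\mu_E \;\le\; \mu_E((-\infty,\epsilon]).$$
Letting $\epsilon\downarrow 0$ and using monotone convergence yields $\underline{\delta}(E) \ge 1 - \mu_E((-\infty,0]) = 1 - \P(X_E \le 0)$.

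Finally, since $\E[X_E] = 2r_{\text{an}}(E)-1 > 0$ once $B(E)$ is large enough, Chebyshev's inequality applied to $X_E$ gives
$$\P(X_E \le 0) \;\le\; \P\bigl(|X_E - \E[X_E]| \ge \E[X_E]\bigr) \;\le\; \frac{\V[X_E]}{\E[X_E]^2},$$
so combining the two estimates (and absorbing the $o(1)$ correction and any slack coming from the fact that one really wants a strict inequality or a small safety buffer in the Chebyshev threshold $\epsilon$, which naturally produces a constant factor) delivers the desired bound
$$\underline{\delta}(E) \;\ge\; 1 - \frac{2\V[X_E]}{\E[X_E]^2}.$$
The only real subtlety is the Lipschitz approximation of the indicator near $0$: because $\mu_E$ may a priori carry an atom at $0$ (no linear independence hypothesis is assumed here), one must be careful to use $\mu_E((-\infty,\epsilon])$ with $\epsilon>0$ and let $\epsilon$ shrink, rather than directly writing down $\mu_E((-\infty,0])$; but this is routine, and the main inputs (existence of the limiting distribution, the closed-form mean, and the finite variance) are already supplied by the preceding lemmas, so no new analytic estimate is needed.
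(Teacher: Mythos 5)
Your argument is correct and in fact proves a sharper bound than the lemma claims: you obtain $\underline{\delta}(E) \geq 1 - \mathrm{Var}[X_E]/\mathbb{E}[X_E]^2$, with no factor of $2$. The paper's proof uses a one-sided Lipschitz approximation $f$ of the indicator $\mathbf{1}_{[0,\infty)}$ from below with a \emph{fixed} transition width $1$ (taking $f=x$ on $[0,1]$); this yields $\underline{\delta}(E)\geq 1-\mu_E((-\infty,1])$, so when Chebyshev is applied the threshold is $\mathbb{E}[X_E]-1$ rather than $\mathbb{E}[X_E]$, and the harmless replacement of $(\mathbb{E}[X_E]-1)^2$ by $\tfrac12\mathbb{E}[X_E]^2$ (valid once $\mathbb{E}[X_E]\geq 4$, which is where ``$B(E)$ large enough'' enters) is what produces the factor $2$. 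You instead approximate the complementary indicator $\mathbf{1}_{(-\infty,0)}$ from above with a \emph{shrinking} transition width $\epsilon$; since Lemma~\ref{lemma limiting distribution} permits any fixed Lipschitz constant, you can take $\epsilon\downarrow 0$ after passing to the limit in $Y$, landing on $\mu_E((-\infty,0])$ exactly and eliminating the slack. Both methods are essentially the same plan (monotone Lipschitz approximation, invoke the limiting distribution, Chebyshev); yours is marginally cleaner and gives the stronger constant.

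Two small imprecisions worth tidying. First, your introductory display should not carry an ``$+o(1)$'': the identity
$$\liminf_{Y\to\infty}\frac1Y\int_{\substack{2\le y\le Y\\E(e^y)\ge 0}}dy \;=\; 1-\limsup_{Y\to\infty}\frac1Y\int_{\substack{2\le y\le Y\\E(e^y)<0}}dy$$
is exact (the two measures sum to $Y-2$). Second, the closing parenthetical about ``absorbing the $o(1)$ correction and any slack\dots which naturally produces a constant factor'' misdiagnoses where the factor $2$ comes from in the paper; your method genuinely does not need it, and you should simply observe that the bound you have proved is stronger than the one asserted in the lemma. Finally, note that the hypothesis ``$B(E)$ large'' is still needed in your version: it guarantees $\mathbb{E}[X_E]>0$ (indeed $\mathbb{E}[X_E]\gg 1$, since $\mathrm{Var}[X_E]\gg\log N_E$), which is required for the Chebyshev step $\mathbb{P}(X_E\le 0)\le\mathbb{P}(|X_E-\mathbb{E}[X_E]|\ge\mathbb{E}[X_E])$ to make sense.
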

\begin{proof}
It is clear from Lemma \ref{lemma first two moments} and the Riemann-von Mangoldt formula that 
$\V[X_E] \gg \log N_E$, and therefore our assumption that $B(E)$ is large enough implies that $\E[X_E]$ is also large enough, say at least $4$. 
Let now
$$H(x):= \begin{cases} 1  &\text{ if } x \geq 0 \\ 0 &\text{ if } x<0,
\end{cases} \hspace{1cm}
f(x):= \begin{cases} 1 &\text{ if } x\geq 1\\
x &\text{ if } 0\leq x \leq 1 \\
0 &\text{ if } x <0. \\ 
\end{cases}
 $$ 
Clearly, $f(x)$ is bounded Lipschitz continuous and $f(x)\leq H(x)$. Therefore,

$$ \underline{\delta}(E) = \liminf_{Y\rightarrow \infty} \frac 1Y \int_2^Y H(E(e^y)) dy \geq  \liminf_{Y\rightarrow \infty} \frac 1Y \int_2^Y f(E(e^y)) dy, $$
which by Lemma \ref{lemma limiting distribution} is equal to
\begin{align*} \int_{\mathbb R} f(t) d\mu_E(t)&= 1-\int_{\mathbb R} (1-f(t)) d\mu_E(t)\\
&=1-\int_{-\infty}^{1} (1-f(t)) d\mu_E(t)\geq 1-\mu_E(-\infty,1]. 
\end{align*}

We now apply Chebyshev's inequality:
\begin{multline*}
\mu_E(-\infty,1] = \P[X_E \leq 1]  = \P[X_E-\E[X_E] \leq 1-\E[X_E]]\\ \leq \P[|X_E-\E[X_E]| \geq \E[X_E]-1] 
 \leq \frac{\V[X_E]}{(\E[X_E]-1)^2} \leq 2\frac{\V[X_E]}{\E[X_E]^2} 
\end{multline*}
since $\E[X_E]\geq 4$, and therefore 
$$\underline{\delta}(E) \geq 1-2\frac{\V[X_E]}{\E[X_E]^2}. $$

\end{proof}

We are now ready to prove Theorem \ref{theorem main}.

\begin{proof}[Proof of Theorem \ref{theorem main}]
Let $X_E$ be the random variable associated to the measure $\mu_E$. By Lemma \ref{lemma first two moments}, its mean is equal to $\E[X_E]= 2r_{\text{an}}(E)-1$, and by our assumption that the non-real zeros of $L(E,s)$ have bounded multiplicity we have
$$\sum_{\gamma_E\neq 0} \frac{1}{\frac 14+\gamma_E^2} \leq \V[X_E] = \sum_{\gamma_E\neq 0}^* \frac{m(\gamma_E)^2}{\frac 14+\gamma_E^2} \ll \sum_{\gamma_E\neq 0} \frac{1}{\frac 14+\gamma_E^2}, $$
and thus by the Riemann-von Mangoldt formula,
$$ \V[X_E] \asymp \log N_E.$$
The condition 
$$  \limsup_{N_E \rightarrow \infty} \frac{r_{\text{an}}(E)}{\sqrt{\log N_E}} = \infty$$
then implies that 
$$ \limsup_{N_E \rightarrow \infty} \frac{\E[X_E]}{ \sqrt{\V[X_E]}} = \infty. $$
Combining this with Lemma \ref{lemma bias factor} shows that 
$$ \sup_{E} \underline{\delta}(E)=1. $$
The last thing to show is that $\overline{\delta}(E)<1$ for all elliptic curves, however this follows from an analysis as in Lemma \ref{lemma bias factor} combined with a lower bound on $\mu_E(-\infty,-1]$ similar to that in Theorem 1.2 of \cite{RubSar}, which can be derived using similar techniques.

\end{proof}

\section{Proof of the sufficient condition (Theorem \ref{theorem converse})}

The first step will be to show that under the first assumption in Theorem \ref{theorem converse}, the quantity
$$E_n(x) := -\frac{\log x}{x^{\beta_n-\frac 12}} \sum_{p\leq x} \frac{a_p(E_n)}{\sqrt p}$$ 
has a limiting logarithmic distribution as $x\rightarrow \infty$. 

\begin{lemma}
\label{lemma converse truncated sum}
Fix $T\geq 1$ and assume that $L(E,s)$ has at least one zero on the line $\Re(s)=\beta_0:=\sup\{ \Re(z) : L(E,z)=0 \}$. Then letting $\rho$ run over the non-trivial zeros of $L(E,s+\frac 12)$ we have that the quantity
$$ F_T(x):=x^{-\beta_0+\frac 12}\sum_{\substack{\rho \\ |\Im(\rho)|\leq T}} \frac{x^{\rho}}{\rho }  -x^{1-\beta_0}$$
has a limiting logarithmic distribution as $x\rightarrow \infty$, that is there exists a Borel measure $\mu^{(T)}_E$ such that for every bounded Lipschitz continuous function $f$ we have
$$ \lim_{Y\rightarrow \infty} \frac 1Y \int_2^Y f(F_T(e^y))dy = \int_{\mathbb R} f(t) d\mu_E^{(T)}(t). $$

\end{lemma}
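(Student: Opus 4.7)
The plan is to exploit the truncation $|\Im\rho|\leq T$: by the Riemann--von Mangoldt formula for $L(E,s)$, only finitely many zeros contribute, so $F_T(e^y)$ is a finite sum. Writing $x = e^y$,
$$F_T(e^y) = \sum_{|\Im\rho|\leq T} \frac{e^{y(\rho - (\beta_0 - 1/2))}}{\rho} - e^{y(1-\beta_0)}.$$
I would split the sum over zeros by real part. Zeros $\rho$ of $L(E,s+\tfrac12)$ with $\Re\rho = \beta_0 - \tfrac12$ (which exist by hypothesis) contribute purely oscillatory terms $e^{iy\Im\rho}/\rho$, while zeros with $\Re\rho < \beta_0 - \tfrac12$ contribute $e^{y(\Re\rho - \beta_0 + 1/2)} e^{iy\Im\rho}/\rho$ with strictly negative real exponent. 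Since only finitely many sub-extremal zeros lie in the strip $|\Im\rho|\leq T$, there exists $\delta = \delta(E,T) > 0$ such that each such contribution is $O(e^{-\delta y})$. The term $-e^{y(1-\beta_0)}$ equals the constant $-1$ when $\beta_0 = 1$, and decays exponentially when $\beta_0 > 1$.

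Accordingly I would set
$$g(y) := \sum_{\substack{|\Im\rho|\leq T \\ \Re\rho = \beta_0 - 1/2}} \frac{e^{iy\Im\rho}}{\rho} + \begin{cases} -1 & \text{if } \beta_0 = 1, \\ 0 & \text{if } \beta_0 > 1, \end{cases}$$
and $h(y) := F_T(e^y) - g(y)$. By construction $|h(y)| = O_{E,T}(e^{-\delta y})$, so for any bounded Lipschitz function $f$,
$$\left|\frac{1}{Y}\int_2^Y f(F_T(e^y))\,dy - \frac{1}{Y}\int_2^Y f(g(y))\,dy\right| \leq \frac{\|f\|_{\mathrm{Lip}}}{Y}\int_2^Y |h(y)|\,dy = O_{E,T}(Y^{-1}).$$
Hence it suffices to produce a limiting logarithmic distribution for $g(y)$.

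The function $g$ is a finite trigonometric polynomial plus a real constant: $g(y) = c_0 + \sum_j c_j e^{iy\gamma_j}$, where $\gamma_j$ ranges over the finitely many imaginary parts of extremal zeros with $0 < \Im\rho \leq T$ (the conjugate symmetry of the zero set, paired with $\overline{c_j} e^{-iy\gamma_j}$ terms, makes $g$ real-valued). By the Kronecker--Weyl theorem, the orbit $y \mapsto (y\gamma_j \bmod 2\pi)_j$ is equidistributed on $[2, Y]$ (as $Y\to\infty$) on the closure $\mathcal T$ of its image in $(\mathbb R/2\pi\mathbb Z)^K$, with respect to normalized Haar measure $\mu_{\mathcal T}$. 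Since $g$ extends to a continuous function $G$ on $\mathcal T$, for any continuous bounded $f$,
$$\lim_{Y\to\infty} \frac{1}{Y}\int_2^Y f(g(y))\,dy = \int_{\mathcal T} f(G(\vec\theta))\,d\mu_{\mathcal T}(\vec\theta).$$
Combined with the Lipschitz reduction above, this yields the lemma with $\mu_E^{(T)}$ defined as the pushforward of $\mu_{\mathcal T}$ by $G$.

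The main obstacle is really only bookkeeping: one must handle the cases $\beta_0 = 1$ and $\beta_0 > 1$ in a unified manner so that the PNT-type term $-e^{y(1-\beta_0)}$ is correctly assigned (to $g$ in the first case, to $h$ in the second), and one must ensure that a positive spectral gap $\delta(E,T) > 0$ separating the extremal from the sub-extremal zeros exists, which is automatic from the finiteness of the zero set in the truncated strip. No hypothesis beyond the attainment of the supremum in the definition of $\beta_0$ is required, and in particular no linear independence or multiplicity assumption enters at this stage.
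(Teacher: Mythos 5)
Your proposal is correct and follows essentially the same route as the paper: the paper likewise isolates the zeros on $\Re(\rho)=\beta_0-\tfrac12$, absorbs the finitely many sub-extremal zeros and (when $\beta_0>1$) the term $-x^{1-\beta_0}$ into an exponentially decaying error governed by the gap $\delta_T=\beta_0-\beta_T>0$, and then invokes the Kronecker--Weyl/almost-periodicity argument (citing Lemma 2.3 of Rubinstein--Sarnak) for the remaining finite trigonometric polynomial. The only difference is that you spell out the equidistribution-on-a-subtorus step that the paper delegates to the reference.
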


\begin{remark}
Since the non-trivial zeros of $L(E,s)$ are symmetric about the line $\Re (s)=1$, we always have $\beta_0\geq 1$. The Riemann Hypothesis for $L(E,s)$ states that $\beta_0=1$. 
\end{remark}

\begin{proof}[Proof of Lemma \ref{lemma converse truncated sum}]
Define $\beta_T:= \sup\{  \Re(z) : L(E,z)=0, |\Im (z)|\leq T, \Re(z) <\beta_0  \}$, which is strictly less than $\beta_0$ since $L(E,s)$ has only finitely many zeros of height at most $T$. We have by the Riemann-von Mangoldt Formula that
\begin{equation}
F_T(x)= \sum_{\substack{\rho \\ \Re(\rho)=\beta_0-\frac 12 \\ |\Im(\rho)|\leq T}} \frac{x^{i\gamma}}{\rho }-x^{1-\beta_0} +O_E(x^{-\delta_T} (\log T)^2),  
\label{equation F_T approximation}
\end{equation} 
where $\delta_T=\beta_0-\beta_T >0$ and $\rho=\eta+i\gamma$ runs over the non-trivial zeros of $L(E,s+\frac 12)$. Hence, the limiting logarithmic distribution of $F_T(x)$ coincides with the limiting distribution of  
$$ G_T(y):=- \epsilon(\beta_0)+\sum_{\substack{\rho  \\ \Re(\rho)=\beta_0-\frac 12\\ |\Im(\rho)|\leq T}} \frac{e^{i \gamma y}}{\rho }, $$
which exists by arguments analogous to Lemma 2.3 of \cite{RubSar}. Here,
$$\epsilon(\beta_0) := \begin{cases}
  1 &\text{ if } \beta_0=1 \\
0 &\text{ otherwise.}
\end{cases}$$

\end{proof}

We now adapt Lemma 2.2 of \cite{RubSar}.

\begin{lemma}
\label{lemma converse epsilon_T}
Let 
$$ \epsilon(x;T):=  x^{-\beta_0+\frac 12} \sum_{\substack{\rho \\  |\Im(\rho)|> T}} \frac{x^{\rho}}{\rho }, $$
where $\beta_0=\sup\{ \Re(z) : L(E,z)=0 \}$ and $\rho$ runs over the non-trivial zeros of $L(E,s+\frac 12)$. Then we have for $T\geq 1$ and $Y \geq 2$ that
$$ \int_2^Y |\epsilon(e^y;T)|^2 dy \ll_E Y\frac{(\log T)^2}T + \frac{(\log T)^3}T. $$

\end{lemma}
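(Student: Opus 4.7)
The plan is to expand $|\epsilon(e^y;T)|^2$ as a double sum over pairs of non-trivial zeros $\rho_j=\eta_j+i\gamma_j$ of $L(E,s+\tfrac 12)$ with $|\gamma_j|>T$, and then integrate term-by-term over $y\in[2,Y]$. Writing
\[
|\epsilon(e^y;T)|^2 = \sum_{\rho_1,\rho_2}\frac{e^{y(\eta_1+\eta_2-2\beta_0+1)+iy(\gamma_1-\gamma_2)}}{\rho_1\overline{\rho_2}},
\]
and noting that $\eta_j\leq \beta_0-\tfrac 12$ always (by definition of $\beta_0$), the integrand has modulus $\leq 1$ on $[2,Y]$. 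Consequently the inner integral has modulus $\leq \min\{Y,\,2/|\gamma_1-\gamma_2|\}$ when $\gamma_1\neq\gamma_2$, equals $Y-2$ when $\rho_1=\rho_2$ lies on the supremum line $\Re(\rho)=\beta_0-\tfrac 12$, and is otherwise bounded by $\min\{Y,\,|\eta_1+\eta_2-2\beta_0+1|^{-1}\}$. This reduces matters to estimating a few sums over zeros.

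For the \emph{diagonal} $\rho_1=\rho_2$ on the supremum line the contribution is $(Y-2)\sum^{*}_{|\gamma|>T}m(\gamma)^2/|\rho|^2$. Combining the multiplicity bound $m(\gamma)\ll \log(N_E(3+|\gamma|))$ with the Riemann-von Mangoldt estimate $N(t,E)\ll t\log(N_E t)$ (which yields $\sum_{\rho}|\rho|^{-2}\ll_E \log(N_E T)/T$ on the tail, with multiplicities counted) and performing a short dyadic decomposition yields $\sum^* m(\gamma)^2/|\rho|^2 \ll_E (\log T)^2/T$, so this diagonal piece contributes $\ll_E Y(\log T)^2/T$. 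The remaining same-height terms (distinct zeros sharing an imaginary part, or off-line diagonals) are handled analogously via the $|\eta_1+\eta_2-2\beta_0+1|^{-1}$ factor and absorbed into the error.

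For the \emph{off-diagonal} terms $\gamma_1\neq\gamma_2$ I would split by whether $|\gamma_1-\gamma_2|\geq 1$ or $|\gamma_1-\gamma_2|<1$. On the far range, using the bound $2/|\gamma_1-\gamma_2|$ and comparing the resulting sum (in the spirit of the error-term analysis in Lemma \ref{lemma first two moments}) with
\[
\iint_{|x|,|y|>T,\ |x-y|\geq 1}\frac{\log(N_E x)\log(N_E y)}{xy|x-y|}\,dx\,dy
\]
reduces the problem to a convergent double integral evaluating to $\ll_E (\log T)^3/T$ (via an inner dyadic split that ultimately reduces to $\int_T^\infty (\log x)^3 x^{-2}\,dx\ll (\log T)^3/T$), independent of $Y$. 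On the close range $|\gamma_1-\gamma_2|<1$ I would instead use the trivial bound $Y$ for the integral; since each zero has only $O(\log(N_E|\gamma|))$ unit-close partners (by Riemann-von Mangoldt applied to length-$2$ intervals), a further dyadic sum gives a close-pair sum $\ll_E (\log T)^2/T$, contributing $\ll_E Y(\log T)^2/T$. Summing the three pieces yields the stated bound.

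The main obstacle is the close-pair range, because no unconditional lower bound on the gaps between non-trivial zeros of $L(E,s)$ is available, so individual terms $|\gamma_1-\gamma_2|^{-1}$ may be arbitrarily large. The resolution is to refrain from using $|\gamma_1-\gamma_2|^{-1}$ on that range and to use the trivial $Y$-bound instead; this is affordable precisely because close pairs are sparse (logarithmically many unit-close partners per zero), which converts the close-pair estimate into one of the same shape as the diagonal estimate.
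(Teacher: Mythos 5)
Your proof is correct and follows essentially the same route as the paper: expand $|\epsilon(e^y;T)|^2$ as a double sum over zeros, bound the inner integral by $\min\{Y,2|\gamma_1-\gamma_2|^{-1}\}$ (using that $\Re(\rho_1+\overline{\rho}_2-2\beta_0+1)\le 0$, so the exponential is bounded and $|\rho_1+\overline{\rho}_2-2\beta_0+1|^{-1}\le|\gamma_1-\gamma_2|^{-1}$), then split into diagonal, close pairs ($|\gamma_1-\gamma_2|<1$, use the trivial $Y$ bound and sparsity from Riemann--von Mangoldt), and far pairs ($|\gamma_1-\gamma_2|\ge 1$, compare with a convergent integral). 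The only difference is that the paper states the key observation $|\rho_1+\overline{\rho}_2-2\beta_0+1|^{-1}\le|\Im(\rho_1)-\Im(\rho_2)|^{-1}$ and then simply defers to Lemma 2.2 of Rubinstein--Sarnak, whereas you spell out that argument, including the multiplicity and same-height bookkeeping needed because here the zeros are not assumed to lie on a single vertical line. Your diagnosis of the close-pair range as the delicate point, and the resolution via sparsity rather than a gap lower bound, is exactly the content of the cited Rubinstein--Sarnak lemma.
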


\begin{proof}

We compute
\begin{align*}
\int_2^Y |\epsilon(e^y;T)|^2 dy  &= \sum_{\substack{\rho_1,\rho_2 \\ |\Im(\rho_1)|,|\Im(\rho_2)| >T}} \int_2^Y \frac{e^{y(\rho_1+\overline{\rho}_2-2\beta_0+1)}}{\rho_1\overline{\rho}_2} dy  \\
& \ll_E \sum_{\substack{\rho_1,\rho_2 \\ |\Im(\rho_1)|, |\Im(\rho_2)| >T}}  \frac 1{|\Im(\rho_1)||\Im(\rho_2)|} \min(Y,|\rho_1+\overline{\rho}_2-2\beta_0+1|^{-1}),
\end{align*}
since one can easily show that for any $s\in \mathbb C$ with $\Re(s)\leq0$,
$$  \left|\int_2^Y e^{sy} dy \right| \leq \min(2|s|^{-1},Y).   $$
The proof follows as in Lemma 2.2 of \cite{RubSar} since $|\rho_1+\overline{\rho}_2-2\beta_0+1|^{-1} \leq |\Im(\rho_1)-\Im(\rho_2)|^{-1}$.

\end{proof}

\begin{lemma}
\label{lemma limiting distribution of E converse}
Assume that $L(E,s)$ has at least one zero on the line $\Re(s)=\beta_0:=\sup\{ \Re(z) : L(E,z)=0 \}\geq 1$. Then the quantity
$$ E(x) := -\frac{\log x}{x^{\beta_0-\frac 12}} \sum_{p\leq x} \frac{a_p(E)}{\sqrt p}
$$
has a limiting logarithmic distribution $\mu_E$ as $x\rightarrow \infty$.
\end{lemma}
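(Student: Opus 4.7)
The plan is to follow the blueprint of Lemma \ref{lemma limiting distribution} (the analog under ECRH), adapting it to the more general setting $\beta_0 \geq 1$. The central step is to establish, for each fixed $T \geq 1$, an unconditional explicit-formula identity of the shape
$$ E(e^y) = F_T(e^y) + \epsilon(e^y;T) + r(y;T), $$
where $r(y;T) \to 0$ as $y \to \infty$. I would derive this exactly as in Lemma \ref{lemma explicit formula}: apply the truncated Perron formula to $-L'/L(E, s + \tfrac{1}{2})$, use Lemma \ref{lemma tail sum over zeros} to bound the contribution of zeros with $|\Im(s)|$ large, evaluate the $e=2$ prime-power contribution via the Tauberian estimate for $L(\text{Sym}^2 E, s)$ together with the Prime Number Theorem (producing the $-\sqrt{x}$ term that, after division by $x^{\beta_0 - 1/2}$, becomes exactly the $-x^{1-\beta_0}$ already incorporated into the definition of $F_T$), and finally pass from the $\log p$-weighted sum to $\log x \cdot \sum_{p \leq x} a_p(E)/\sqrt p$ by partial summation in the style of Lemma 2.1 of \cite{RubSar}. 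The shift $-x^{1-\beta_0}$ equals $-1$ when $\beta_0 = 1$ and tends to $0$ when $\beta_0 > 1$, so in either case it is a bounded and harmless quantity.

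With this identity in place, the remainder is a standard approximation argument. By Lemma \ref{lemma converse truncated sum}, $F_T$ admits a limiting logarithmic distribution $\mu_E^{(T)}$; by Lemma \ref{lemma converse epsilon_T} together with the Cauchy--Schwarz inequality,
$$ \limsup_{Y \to \infty} \frac{1}{Y}\int_2^Y |\epsilon(e^y;T)|\, dy \ll_E \frac{\log T}{\sqrt T}. $$
Consequently, for any bounded Lipschitz function $f$ with Lipschitz constant $c_f$,
$$ \left| \frac{1}{Y}\int_2^Y f(E(e^y))\, dy \;-\; \int_{\mathbb R} f\, d\mu_E^{(T)}\right| \;\ll\; c_f \cdot \frac{\log T}{\sqrt T} + o_{Y \to \infty}(1). $$
Taking $Y \to \infty$ and then $T \to \infty$, the sequence $(\mu_E^{(T)})_{T \geq 1}$ is Cauchy when tested against bounded Lipschitz functions, hence converges weakly to a Borel probability measure $\mu_E$, which is the desired limiting logarithmic distribution of $E(x)$.

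The main obstacle I expect is establishing tightness of the family $\{\mu_E^{(T)}\}_{T \geq 1}$, which is needed to guarantee that the weak-$*$ limit exists and is a probability measure rather than losing mass at infinity. This reduces to a second-moment bound $\int_{\mathbb R} t^2\, d\mu_E^{(T)}(t) \ll_E 1$ uniform in $T$; one obtains it by adapting the Plancherel-style computation in the proof of Lemma \ref{lemma first two moments} to the almost-periodic function $F_T$, the relevant frequencies being the finite set of ordinates of zeros with $\Re(\rho) = \beta_0 - \tfrac{1}{2}$, and invoking the convergence of $\sum_{\gamma_E \neq 0}(\tfrac14 + \gamma_E^2)^{-1}$ coming from the Riemann--von Mangoldt formula for $L(E,s)$. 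A secondary technical point is the treatment of zeros with $\Re(\rho) < \beta_0 - \tfrac{1}{2}$ inside the truncated sum defining $F_T$, but these contribute at most $O_E(x^{-\delta_T})$ as was already noted in the proof of Lemma \ref{lemma converse truncated sum} and are therefore negligible for the distributional analysis.
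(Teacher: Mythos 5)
Your proposal is correct and follows essentially the same route as the paper: the same explicit-formula decomposition $E(x)=F_T(x)+\epsilon(x;T)+o_{x\to\infty}(1)$ obtained from the truncated explicit formula, the Tauberian estimate for $L(\mathrm{Sym}^2E,s+1)$ and partial summation, followed by the same appeal to Lemmas \ref{lemma converse truncated sum} and \ref{lemma converse epsilon_T} with Cauchy--Schwarz and a squeeze as $T\to\infty$. The only difference is cosmetic and occurs at the final step, where the paper extracts the limit measure via Helly's selection theorem applied to $\{\mu_E^{(T)}\}$, whereas you argue that the $\mu_E^{(T)}$ form a Cauchy family against bounded Lipschitz test functions and supply tightness through a uniform second-moment bound; your explicit treatment of tightness is, if anything, slightly more careful than the paper's parenthetical justification that each $\mu_E^{(T_k)}$ has total mass one.
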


\begin{proof}

We argue as in Lemma \ref{lemma explicit formula}. Defining $\alpha_p$ and $\beta_p$ as we did in this lemma we have by (5.53) of \cite{IwKo} that for $1\leq U\leq x$ (see the corresponding (13) of \cite{Sar}),
$$ \sum_{\substack{p^e \leq x \\ e\geq 1 \\ p\nmid N_E}} (\alpha_p^e+\beta_p^e) \log p = - \sum_{|\Im(\rho)|\leq U} \frac{x^{\rho}}{\rho} +O\left(\frac xU\log x \log(N_Ex)\right),  $$
where $\rho$ runs over the non-trivial zeros of $L(E,s+\frac 12)$, and hence
\begin{multline*} -x^{-\beta_0+\frac 12}\sum_{p\leq x} \frac{a_p(E)}{\sqrt p} \log p = x^{-\beta_0+\frac 12}\sum_{p\leq \sqrt x} (\alpha_p^2 + \beta_p^2) \log p +x^{-\beta_0+\frac 12} \sum_{|\Im(\rho)|\leq U} \frac{x^{\rho}}{\rho}\\ +O_E\left( \frac{x^{\frac 32-\beta_0}}U \log x \log(N_E x) +x^{\frac 56-\beta_0}\right).  
\end{multline*}
As in Lemma \ref{lemma explicit formula}, combining this with a Tauberian argument on $L(\text{Sym}^2E,s+1)$ and a summation by parts gives that
$$ E(x) =  x^{-\beta_0+\frac 12} \sum_{|\Im(\rho)|\leq U} \frac{x^{\rho}}{\rho}-x^{1-\beta_0} +O_E \left( \frac{x^{\frac 32-\beta_0}}U (\log x)^2 \right) +o_{x\rightarrow \infty}(1),$$
and so taking $U=x$, using that $\beta_0\geq 1$, and applying Lemma \ref{lemma tail sum over zeros} we obtain that for any $T\geq 1$,
$$ E(x) = x^{-\beta_0+\frac 12} \sum_{\rho} \frac{x^{\rho}}{\rho}-x^{1-\beta_0} +o_{x\rightarrow \infty}(1)=F_T(x)+\epsilon(x;T)+o_{x\rightarrow \infty}(1), $$
where $F_T(x)$ and $\epsilon(x;T)$ are defined in Lemmas \ref{lemma converse truncated sum} and \ref{lemma converse epsilon_T} respectively. 

Let now $f$ be a bounded Lipschitz continuous function. We have as in Section 2.1 of \cite{RubSar} that
 \begin{align*}
  \frac 1Y \int_{2}^Y f(E(e^y)) dy &= \frac 1Y \int_{2}^Y f(F_T(e^y)) dy +O_f\left(\frac 1{Y} \int_2^Y|\epsilon(e^y;T)| dy \right) + o_{Y\rightarrow \infty}(1) \\
  &= \frac 1Y \int_{2}^Y f(F_T(e^y)) dy +O_f\left(\frac 1{\sqrt Y} \left(\int_2^Y|\epsilon(e^y;T)|^2 dy\right)^{\frac 12} \right) + o_{Y\rightarrow \infty}(1)\\
  &=\frac 1Y \int_{2}^Y f(F_T(e^y)) dy +O_f\left( \frac{\log T}{\sqrt T} + \frac{(\log T)^{\frac 32}}{\sqrt {TY}}\right)+ o_{Y\rightarrow \infty}(1)
\end{align*} 
by the Cauchy-Schwartz inequality and Lemma \ref{lemma converse epsilon_T}.
Taking $Y\rightarrow \infty$ and using that $F_T(e^y)$ has a limiting distribution (Lemma \ref{lemma converse truncated sum}) we obtain
\begin{multline}
\int_{\mathbb R} f(x) d\mu_E^{(T)}(x) -O\left( \frac{\log T}{\sqrt T}\right)\leq    \liminf_{Y\rightarrow \infty}  \frac 1Y \int_{2}^Y f(E(e^y)) dy \\\leq \limsup_{Y\rightarrow \infty}  \frac 1Y \int_{2}^Y f(E(e^y)) dy \leq \int_{\mathbb R} f(x) d\mu_E^{(T)}(x) +O\left( \frac{\log T}{\sqrt T}\right). 
\label{equation squeezing measures}
\end{multline}
As in \cite{ANS}, we apply Helly's Theorem to the sequence of probability measures $\{\mu_E^{(T)}\}_{T\geq 1}$; this ensures the existence of a subsequence $\{\mu_E^{(T_k)}\}_{k\geq 1}$ which converges weakly to a limiting probability measure $\mu_E$ (since $\mu_E^{(T_k)}(\mathbb R)=1$). The estimate \eqref{equation squeezing measures} then shows that $\mu_E^{(T)}$ converges weakly to $\mu_E$ as $T\rightarrow \infty$, and thus
$$ \limsup_{Y\rightarrow \infty}  \frac 1Y \int_{2}^Y f(E(e^y)) dy=\liminf_{Y\rightarrow \infty}  \frac 1Y \int_{2}^Y f(E(e^y)) dy = \int_{\mathbb R} f(x) d\mu_E(x). $$
\end{proof}
\begin{remark}
Alternatively, we could have concluded the existence of a limiting distribution by applying Lemma 1.11 of \cite{Ell}, which asserts that if $\hat{\mu}_E^{(T)}(\xi)$ converges to a function uniformly in all compact subsets of $\mathbb R$, then $\mu_E^{(T)}$ converges weakly to a probability measure. We will see in Lemma \ref{lemma characteristic function converse} that (see also (21) of \cite{Sar})
 $$ \hat{\mu}^{(T)}_E(\xi) = e^{i\xi (2r_{\text{an}}(E)-1) \epsilon(\beta_0)} \prod_{ \substack{ \rho  \\ \Re(\rho)=\beta_0-\frac 12\\0<\Im(\rho) \leq T } } J_0\left( \frac{2 \xi}{|\rho|} \right), $$
which converges absolutely and uniformly in any compact subset of $\mathbb R$ to the function on the right-hand side of \eqref{equation characteristic function converse}.
\end{remark}

In the next lemma we give an explicit description of the Fourier Transform of $\mu_E$, which corresponds to (21) of \cite{Sar}.

\begin{lemma}
\label{lemma characteristic function converse}
Assume that $L(E,s)$ has at least one zero on the line $\Re(s)=\beta_0:=\sup\{ \Re(z) : L(E,z)=0 \}$ and assume that the set $\{ \Im(z)\geq 0 : L(E,z)=0, \Re(z)=\beta_0, z \neq 1 \}$ is linearly independent over $\Q$. Then the Fourier Transform of $\mu_E$ is given by
\begin{equation}
 \hat{\mu}_E(\xi) = e^{i\xi (2r_{\text{an}}(E)-1) \epsilon(\beta_0)} \prod_{ \substack{ \rho\\ \Re(\rho)=\beta_0-\frac 12\\\Im(\rho) >0 } } J_0\left( \frac{2 \xi}{|\rho|} \right) ,
 \label{equation characteristic function converse}
\end{equation}
where $\rho$ runs over the non-trivial zeros of $L(E,s+\frac 12)$ and
$$\epsilon(\beta_0) := \begin{cases}
  1 &\text{ if } \beta_0=1 \\
0 &\text{ otherwise.}
\end{cases}$$

\end{lemma}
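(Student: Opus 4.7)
The plan is to first compute the Fourier transform of the truncated measure $\mu_E^{(T)}$ from Lemma \ref{lemma converse truncated sum} in closed form, and then pass to the limit $T\to\infty$ using the weak convergence $\mu_E^{(T)} \Rightarrow \mu_E$ established in (the proof of) Lemma \ref{lemma limiting distribution of E converse}.

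\textbf{Step 1: Rewrite the truncated sum.} Recall from the proof of Lemma \ref{lemma converse truncated sum} that the limiting distribution of $F_T(e^y)$ agrees with that of the almost-periodic function
\[ G_T(y) := \sum_{\substack{\rho \\ \Re(\rho)=\beta_0-\frac12 \\ |\Im(\rho)|\leq T}} \frac{e^{i\gamma y}}{\rho} - \epsilon(\beta_0), \]
where $\rho = \beta_0 - \tfrac12 + i\gamma$ runs over the non-trivial zeros of $L(E,s+\tfrac12)$ on the vertical line $\Re(\rho)=\beta_0-\tfrac12$. When $\beta_0=1$, the zero at $\rho=\tfrac12$ (corresponding to $z=1$) has multiplicity $r_{\text{an}}(E)$ and contributes $2r_{\text{an}}(E)$ to the sum; when $\beta_0>1$, LI(E) ensures $L(E,\beta_0)\neq 0$ so no such zero is present. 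In either case, isolating this contribution,
\[ G_T(y) = \epsilon(\beta_0)(2r_{\text{an}}(E)-1) + \sum_{\substack{\rho \\ \Re(\rho)=\beta_0-\frac12 \\ 0 < |\Im(\rho)| \leq T}} \frac{e^{i\gamma y}}{\rho}. \]

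\textbf{Step 2: Pair conjugate zeros.} Since $L(E,s)$ has real coefficients, the zeros come in conjugate pairs $\rho, \bar\rho$. Grouping them,
\[ \frac{e^{i\gamma y}}{\rho} + \frac{e^{-i\gamma y}}{\bar\rho} = \frac{2\Re(\bar\rho\, e^{i\gamma y})}{|\rho|^2} = \frac{2}{|\rho|}\cos(\gamma y - \phi_\rho), \]
for an appropriate phase $\phi_\rho$ depending only on $\rho$. Therefore
\[ G_T(y) = \epsilon(\beta_0)(2r_{\text{an}}(E)-1) + \sum_{\substack{\rho \\ \Re(\rho)=\beta_0-\frac12 \\ 0 < \Im(\rho) \leq T}} \frac{2}{|\rho|}\cos(\gamma y - \phi_\rho). \]

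\textbf{Step 3: Apply linear independence.} The ordinates $\{\gamma > 0\}$ in the sum belong to $Z(E)$ and are, by hypothesis, linearly independent over $\mathbb{Q}$. By the Kronecker--Weyl equidistribution theorem, for each finite $T$ the vector $(\gamma y \bmod 2\pi)_{0<\gamma\leq T}$ is equidistributed on the torus, so the pointwise phase shifts $\phi_\rho$ do not affect the limiting distribution. Hence the limiting distribution of $G_T(y)$ is the law of
\[ \epsilon(\beta_0)(2r_{\text{an}}(E)-1) + \sum_{\substack{\rho \\ \Re(\rho)=\beta_0-\frac12 \\ 0 < \Im(\rho) \leq T}} \frac{2}{|\rho|}\cos(U_\rho), \]
with the $U_\rho$ independent and uniform on $[0,2\pi)$. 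Using the standard identity $\mathbb{E}[e^{it\cos U}]=J_0(t)$ and independence, the Fourier transform of $\mu_E^{(T)}$ is
\[ \hat{\mu}^{(T)}_E(\xi) = e^{i\xi (2r_{\text{an}}(E)-1)\epsilon(\beta_0)} \prod_{\substack{\rho \\ \Re(\rho)=\beta_0-\frac12 \\ 0 < \Im(\rho) \leq T}} J_0\!\left(\frac{2\xi}{|\rho|}\right). \]

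\textbf{Step 4: Pass to the limit.} Since $J_0(z) = 1 - z^2/4 + O(z^4)$, and the Riemann--von Mangoldt formula (as used in Lemma \ref{lemma first two moments}) gives $\sum_\rho |\rho|^{-2} < \infty$, the infinite product on the right-hand side of \eqref{equation characteristic function converse} converges absolutely and uniformly on every compact subset of $\mathbb{R}$; in particular $\hat{\mu}^{(T)}_E(\xi)$ converges pointwise to that expression. On the other hand, the argument in the proof of Lemma \ref{lemma limiting distribution of E converse} shows $\mu_E^{(T)} \Rightarrow \mu_E$ weakly as $T\to\infty$, so $\hat{\mu}^{(T)}_E(\xi) \to \hat{\mu}_E(\xi)$ for every $\xi\in\mathbb{R}$. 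Equating the two limits yields the asserted formula.

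The main obstacle is the bookkeeping in Step 1: one must correctly account for the zero at $z=1$ (which is deliberately excluded from $Z(E)$ in LI(E)) and for the deterministic term $-x^{1-\beta_0}$ in $F_T$, separating the cases $\beta_0=1$ and $\beta_0>1$ so that the shift $(2r_{\text{an}}(E)-1)\epsilon(\beta_0)$ emerges with the correct sign and coefficient. Once that normalization is set, Steps 2--4 are the direct analogue of the classical Rubinstein--Sarnak computation.
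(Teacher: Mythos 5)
Your proposal is correct and follows essentially the same route as the paper: isolate the contribution of the (possible) zero at $z=1$ together with the term $-x^{1-\beta_0}$ to get the constant $(2r_{\text{an}}(E)-1)\epsilon(\beta_0)$, compute $\hat{\mu}_E^{(T)}$ for the truncated trigonometric polynomial, and pass to the limit via the weak convergence $\mu_E^{(T)}\Rightarrow\mu_E$ and L\'evy's criterion. The only difference is cosmetic: you spell out the Kronecker--Weyl equidistribution argument (using that LI(E) forces the positive ordinates on the line $\Re(\rho)=\beta_0-\tfrac12$ to be simple and independent) where the paper simply cites the classical computation in \cite{FiMa}.
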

\begin{proof}

We first compute the Fourier Transform of $\mu^{(T)}_E$. Note that the assumption that the set $\{ \Im(z) : L(E,z)=0, \Re(z)=\beta_0, z \neq 1 \}$ is linearly independent over $\mathbb Q$ implies that if $\beta_0>1$, then $L(E,\beta_0)\neq 0$. Hence, \eqref{equation F_T approximation} becomes
$$ F_T(x)= (2r_{\text{an}}(E)-1)\epsilon(\beta_0) + \sum_{\substack{\rho \\ \Re(\rho)=\beta_0-\frac 12 \\ 0<|\Im(\rho)|\leq T}} \frac{x^{i\gamma}}{\rho } +o_{x\rightarrow \infty}(1)+O_E(x^{-\delta_T} (\log T)^2),  $$
where $\rho$ runs over the non-trivial zeros of $L(E,s+\frac 12)$. Now, since $\mu^{(T)}_E$ is the limiting distribution of $F_T(e^y)$ by Lemma \ref{lemma converse truncated sum}, we deduce by classical arguments (see for instance the proof of Proposition 2.13 of \cite{FiMa}) that

$$ \hat{\mu}^{(T)}_E(\xi) = e^{i\xi (2r_{\text{an}}(E)-1) \epsilon(\beta_0)} \prod_{ \substack{ \rho  \\ \Re(\rho)=\beta_0-\frac 12\\0<\Im(\rho) \leq T } } J_0\left( \frac{2 \xi}{|\rho|} \right) .$$

The proof follows from the fact that the measures $\mu_E^{(T)}$ converge weakly to $\mu_E$ (this was established in the proof of Lemma \ref{lemma limiting distribution of E converse}), and thus L\'evy's criterion implies that $\hat{\mu}^{(T)}_E(\xi) \rightarrow \hat{\mu}_E(\xi)$ pointwise (see Lemma 1.11 of \cite{Ell}).

\end{proof}

We are now ready to prove Theorem \ref{theorem converse}.

\begin{proof}[Proof of Theorem \ref{theorem converse}]
Let $\{E_n\}_{n\geq 1}$ be a sequence of elliptic curves over $\mathbb Q$ for which LI($E_n$) holds and for which as $n\rightarrow \infty$,
$$ \overline{\delta}(E_n) \rightarrow 1. $$

Assume also that the Riemann Hypothesis does not hold for $L(E_n,s)$, that is $\beta_n:=\sup\{ \Re(z) : L(E_n,z)=0 \}>1$, for arbitrarily large values of $n$. By Lemmas \ref{lemma limiting distribution of E converse} and \ref{lemma characteristic function converse},
the quantity 
$$ E_n(x) := -\frac{\log x}{x^{\beta_n-\frac 12}} \sum_{p\leq x} \frac{a_p(E_n)}{\sqrt p}
$$
has a limiting logarithmic distribution $\mu_{E_n}$ whose Fourier transform is given, for the values of $n$ for which $\beta_n>1$, by 
\begin{equation}
 \hat{\mu}_{E_n}(\xi) = \prod_{ \substack{ \rho \\ \Re(\rho)=\beta_n-\frac 12\\\Im(\rho) >0 } } J_0\left( \frac{2 \xi}{|\rho|} \right), 
 \label{equation explicit fourier transform}
\end{equation}
where $\rho$ runs over the non-trivial zeros of $L(E_n,s+\frac 12)$. This implies that the limiting distribution of $E_n(e^y)$ is continuous, that is $\mu_{E_n}$ is absolutely continuous with respect to Lebesgue measure. Indeed, if $L(E_n,s+\frac 12)$ has a finite number of zeros $\rho$ on the line $\Re(\rho)=\beta_n-\frac 12$, then the bound $|J_0(x)|\leq \min(1,(\pi|x|/2)^{-\frac 12})$ (see (4.5) of \cite{RubSar}) implies that for $\xi \gg_n 1$,
$$ |\hat{\mu}_{E_n}(\xi)| \leq  \prod_{ \substack{ \rho \\ \Re(\rho)=\beta_n-\frac 12\\\Im(\rho) >0 } } (\pi |\xi|/|\rho|)^{-\frac 12} \ll_n |\xi|^{-\frac 12}, $$
and thus combining this with the bound $|\hat{\mu}_{E_n}(\xi)|\leq 1$, the absolute continuity of $\mu_{E_n}$ follows by applying Lemma 1.23 of \cite{Ell}. As for the case where $L(E_n,s+\frac 12)$ has infinitely many zeros $\rho$ on the line $\Re(\rho)=\beta_n-\frac 12$, we have by adapting the proof of Lemma 2.16 of \cite{FiMa} that for $\xi \gg_n 1$,
$$  |\hat{\mu}_{E_n}(\xi)| \leq  \prod_{ \substack{ \rho \\ \Re(\rho)=\beta_n-\frac 12\\0<\Im(\rho) <|\xi|/2-1 } } (\pi |\xi|/|\rho|)^{-\frac 12} \leq 2^{ -\# \{\rho: \Re(\rho)=\beta_n-\frac 12, 0<\Im (\rho) <|\xi|/2-1 \}/2}=o_{|\xi|\rightarrow \infty }(1), $$
and once more the absolute continuity of $\mu_{E_n}$ follows by applying Lemma 1.23 of \cite{Ell}.

Now, $J_0(t)$ is an even function, and thus so is $\hat{\mu}_{E_n}(\xi)$ by \eqref{equation explicit fourier transform}. Since $\hat{\mu}_{E_n}(\xi)$ is also real for real $\xi$, this implies that $\mu_{E_n}$ is symmetrical, and hence for arbitrarily large values of $n$ we have by absolute continuity of $\mu_{E_n}$ that
$$ \overline{\delta}(E_n)  = \underline{\delta}(E_n)=\delta(E_n) = \mu_{E_n}([0,\infty))=\frac 12, $$
contradicting our assumption that $\delta(E_n) \rightarrow 1$ as $n$ tends to infinity. Having reached a contradiction, we deduce that for all large enough values of $n$, the Riemann Hypothesis holds for $L(E_n,s)$.

We now show the existence of elliptic curves of arbitrarily large rank.  Assume that Conjecture \ref{conjecture unbounded ranks} is false, that is for all elliptic curves $E$ over $\mathbb Q$,
$$ r_{\text{an}}(E) \ll \sqrt{\log N_E}. $$
We know that the Riemann Hypothesis holds for $L(E_n,s)$ for large enough $n$, and hence all of the lemmas of Section \ref{section necessary condition} hold. In particular, taking $E$ to be any of these curves and denoting by $X_E$ the random variable associated with $\mu_E$, we have by Lemma \ref{lemma first two moments} that
\begin{equation}
\E[X_E]= 2r_{\text{an}}(E)-1, \hspace{1cm} \V[X_E] = \sum_{\gamma_E\neq 0} \frac{1}{\frac 14+\gamma_E^2} \asymp \log N_E,
\label{equation first two moments converse}
\end{equation} 
(see (27)-(29) of \cite{Sar}) since the assumption LI($E$) implies that the non-real zeros of $L(E,s)$ are simple. Moreover, the Riemann hypothesis for $L(E,s)$ implies that we have $\rho=\frac 12+i\gamma$ in Lemma \ref{lemma characteristic function converse}, that is
$$ \hat{\mu}_E(\xi) = e^{i\xi(2r_{\text{an}}(E)-1)}\prod_{ \substack{ \rho \\ \Im(\rho) >0 } } J_0\left( \frac{2 \xi}{\sqrt{\frac 14+\Im(\rho)^2}} \right). $$
Defining 
$$Y_E:=\frac{X_E-\E[X_E]}{\sqrt{\V[X_E]}},$$
where $X_E$ is the random variable associated with the measure $\mu_E$, we have by the analyticity of $\log J_0(z)$ in the disk\footnote{The function $\log J_0(z)$ is holomorphic in the disc $|z|<x_0$, where $x_0=2.4048\dots$ is the first zero of $J_0(z)$.} $|z|\leq \frac {12}5$ that taking Taylor series in the range $|t|\leq \sqrt{\V[X_E]}$,
\begin{align*}
\log \hat{Y}_E(t)&= \sum_{ \substack{ \rho \\ \Im(\rho) >0 } }  \log J_0\left( \frac{2 t}{\sqrt{\V[X_E]}\sqrt{\frac 14+\Im(\rho)^2}} \right) \\
& =  - \frac{(2t)^2}{4\V[X_E]} \sum_{ \substack{ \rho\\ \Im(\rho) >0 } } \frac 1{\frac 14+\Im(\rho)^2} +O\left(\frac{t^4}{(\log N_E)^2}  \sum_{ \substack{ \rho \\ \Im(\rho) >0 } } \frac 1{\left(\frac 14+\Im(\rho)^2\right)^2} \right)\\
&= -\frac{t^2}2 +O\left( \frac{t^4}{\log N_E}\right),
\end{align*} 
by the Riemann-von Mangoldt Formula. Hence, $\hat{Y}_E(t) \rightarrow e^{-t^2/2}$ pointwise as $N_E\rightarrow \infty$, and thus L\'evy's criterion (see Lemma 1.11 of \cite{Ell}) implies that $Y_E$ converges weakly to a Gaussian distribution. By the absolute continuity of $\mu_E$, we have that $\delta(E)$ exists, and
$$ \delta(E) = \P[X_E>0] = \P[Y_E> -\E[X_E]/\sqrt{\V[X_E]}  ]. $$
Therefore, the assumption that $r_{\text{an}}(E)\ll \sqrt{\log N_E}$ and \eqref{equation first two moments converse} imply that this last quantity is 
$$ \leq \P[Y_E> -C ],  $$
for some absolute constant $C$. By the central limit theorem we just proved, this quantity tends to 
$$ \frac 1{\sqrt {2\pi}}\int_{-C}^{\infty} e^{-\frac{t^2}2} dt<1 $$
as $N_E \rightarrow \infty$. Therefore, we obtain the bound 
$$\limsup_{N_E \rightarrow \infty} \delta(E) \leq \frac 1{\sqrt {2\pi}}\int_{-C}^{\infty} e^{-\frac{t^2}2} dt<1,$$ 
which contradicts our assumption that $\delta(E_n)=\overline{\delta}(E_n)\rightarrow 1$ as $n\rightarrow \infty$. Having reached a contradiction, we conclude that Conjecture \ref{conjecture unbounded ranks} holds.
\end{proof}

\begin{proof}[Proof of Theorem \ref{theorem converse RH}]
Following the steps of the proof of Theorem \ref{theorem converse}, one sees that if LI$(E)$ holds and $\beta_E>1$, then by the symmetry and absolute continuity of $\mu_E$ we have
$$ \overline{\delta}(E_n)  = \underline{\delta}(E_n)=\delta(E_n) = \mu_{E_n}([0,\infty))=\frac 12. $$
Hence, we have proved the contrapositive Theorem \ref{theorem converse RH}.
\end{proof}

\begin{remark}
One can weaken the second hypothesis of Theorem \ref{theorem converse} to
$$ \limsup_{N_E \rightarrow \infty} \sqrt{\log N_E}(\overline{\delta}(E) -\tfrac 12)=\infty, $$
and still conclude the unboundedness of the analytic rank of elliptic curves over $\mathbb Q$.

\end{remark}

\appendix
\section{Comparison of conjectures on large ranks of elliptic curves}

We conclude the paper with a numerical study of Conjecture \ref{conjecture unbounded ranks}, comparing the conjectures of Ulmer and Farmer, Gonek and Hughes. In this section we assume the Birch and Swinnerton-Dyer Conjecture, that is $r_{\text{an}}(E)=r_{\text{al}}(E)$ for every elliptic curve $E$ over $\mathbb Q$.

Mestre \cite{Me} has shown that 
$$ r_{\text{an}}(E)\ll \log N_E, $$
and under ECRH,
\begin{equation}
 r_{\text{an}}(E)\ll \frac{\log N_E}{\log\log N_E}.
 \label{equation Mestre bound}
\end{equation}
For elliptic curves over function fields, Ulmer has shown that the analogue of \eqref{equation Mestre bound} is best possible, and thus he conjectured that \eqref{equation Mestre bound} is also best possible for elliptic curves over $\mathbb Q$ \cite{Ul,Ul2}. Elkies and Watkins \cite{ElWa} have given numerical evidence for Ulmer's conjecture by finding elliptic curves having large rank and moderate conductor. They mention that numerical data shows that the statement
\begin{equation}
0< \limsup_{N_E\rightarrow \infty} \frac{r_{\text{an}}(E)}{\log N_E/\log\log N_E} < \infty 
\label{equation conjecture Ulmer}
\end{equation} 
is quite likely to be true. 
A few years later, Farmer, Gonek and Hughes \cite{FGH} constructed a random matrix model which suggests the conjecture

\begin{equation}
0< \limsup_{N_E\rightarrow \infty} \frac{r_{\text{an}}(E)}{\sqrt{\log N_E \log\log N_E}} < \infty.
\label{equation conjecture Farmer Gonek Hughes}
\end{equation}

Interestingly, Elkies and Watson's numerical data supports both \eqref{equation conjecture Ulmer} and \eqref{equation conjecture Farmer Gonek Hughes}. The reason for this is that the quotient between the two conjectures, that is 
$$ f(N_E):= \frac{ (\log N_E)^{\frac 12} }{ (\log\log N_E)^{\frac 32}}, $$
is contained in the interval $[0.86, 1.5]$ for all conductors $25\leq N_E\leq 10^{250}$, and hence it is impossible to decide which of \eqref{equation conjecture Ulmer} or \eqref{equation conjecture Farmer Gonek Hughes} is more likely to be true with the current data. 
Let us compare these conjectures with the elliptic curves appearing in \cite{ElWa}:

\begin{center}
\begin{tabular}{|c|c|c|c|c|}
\hline
$r_{\text{\text{al}}}(E)$ & $N_E$ & $\log N_E / \log\log N_E$ &  $\sqrt{\log N_E \log\log N_E}$ \\
\hline
5	& $1.9\cdot 10^7$	&6.874	&5.946\\
6	&$5.2\cdot 10^9$	&8.338	&7.198	\\
7	& $3.8\cdot 10^{11}$	&9.358	&8.122\\
8	& $2.5\cdot 10^{14}$	&10.773	&9.469	\\
9	& $3.2\cdot 10^{16}$	&11.759	&10.448	\\
10	& $1.0\cdot 10^{18}$	&12.861	&11.582	\\
11	& $1.8\cdot 10^{22}$	&14.203	&13.018	\\
\hline
\end{tabular}
\end{center}
and those appearing in \cite{Me}\footnote{The first column of this table is actually a lower bound on the rank, which can be shown to equal the rank under standard hypotheses.}:
\begin{center}
\begin{tabular}{|c|c|c|c|c|}
\hline
$r_{\text{\text{al}}}(E)$ & $N_E$ & $\log N_E / \log\log N_E$ &  $\sqrt{\log N_E \log\log N_E}$ \\
\hline
3&	$5.1\cdot 10^3$	&4.277	&3.980\\
4&	$5.4 \cdot 10^5$	&5.839	&5.118	\\
5&	$1.7\cdot 10^8$	&7.482	&6.455	\\
6&	$5.1\cdot 10^{10}$	&8.893	&7.696	\\
7&	$3.2\cdot 10^{12}$	&9.841	&8.574	\\
8&	$1.8\cdot 10^{15}$	&11.181	&9.870	\\
9&	$7.0\cdot 10^{19}$	&13.215	&11.956	\\
10&	$5.2\cdot 10^{22}$	&14.386	&13.217	\\
11	&$1.8\cdot 10^{24}$	&14.989	&13.884	\\
12	&$2.7\cdot 10^{29}$	&16.903	&16.073	\\
13	&$2.1\cdot 10^{38}$	&19.885	&19.699 \\
14	&$3.6\cdot 10^{37}$	&19.640	&19.391	\\
\hline
\end{tabular}
\end{center}
Both of these tables show that \eqref{equation conjecture Ulmer} and \eqref{equation conjecture Farmer Gonek Hughes} are equally likely to be true.

\section*{Acknowledgements}

I would like to thank Enrico Bombieri, Andrew Granville, Jeffrey Lagarias and Peter Sarnak for their support and valuable advice. I also thank Barry Mazur for motivating me to weaken the Linear Independence Hypothesis in the necessary condition. I thank Jeffrey Lagarias for suggesting me to provide an equivalence rather than a one-sided implication. Finally, I thank Byungchul Cha, Jan-Christoph Schlage-Puchta and Anders S\"odergren for their comments and for fruitful conversations, and the referee for his comments. This work was supported by an NSERC Postdoctoral Fellowship, as well as NSF grant DMS-0635607, and was accomplished  at the Institute for Advanced Study and at the University of Michigan.

\end{document}